\newcommand{\RR}{\mathbb{R}}
\newcommand{\ZZ}{\mathbb{Z}}
\newcommand{\QQ}{\mathbb{Q}}
\newcommand{\NN}{\mathbb{N}}
\newcommand{\im}{\mathrm{im}}
\newtheorem{thm}{Theorem}[section]
\newtheorem{cor}[thm]{Corollary}
\newtheorem{lem}[thm]{Lemma}
\theoremstyle{definition}
\newtheorem{rem}[thm]{Remark}
\newtheorem{defn}[thm]{Definition}
\newtheorem{ex}[thm]{Example}
\begin{document}

\title{On integral Chang-Skjelbred computations with disconnected isotropy groups}
\author{Leopold Zoller}

\maketitle

\begin{abstract}
The Chang-Skjelbred method computes the cohomology of a suitable space with a torus action from its equivariant one-skeleton. We show that, under certain restrictions on the cohomological torsion, the integral cohomology is encoded in the one-skeleton even in the presence of arbitrary disconnected isotropy groups. We provide applications to Hamiltonian actions as well as to the GKM case. In the latter, our results lead to a modification of the GKM formula for graph cohomology.
\end{abstract}

\section{Introduction}
Let $T$ be a compact torus. For a suitably nice $T$-space, say a finite $T$-CW-complex, there is a powerful method to compute the (equivariant) cohomology of $X$ from the subspace $X_1$ which consists of all orbits of dimension $\leq 1$. Concretely, for some commutative coefficient ring $A$ (traditionally $A=\QQ$) and under the right topological assumptions (see below) the Chang-Skjelbred sequence
\begin{equation}\label{eq:CS}
0\rightarrow H^*_T(X;A)\rightarrow H^*_T(X^T;A)\rightarrow H^*_T(X_1,X^T;A)
\end{equation}
is exact. This tells us that $H^*_T(X;A)$ is embedded in $H^*_T(X^T;A)$ and that the image agrees with the image of the map on equivariant cohomology induced by the inclusion $X^T\rightarrow X_1$. Hence the ring $H^*_T(X;A)$ is completely encoded in $X_1$.

The appeal of this method is that the space $X_1$ is often much simpler than $X$ itself. A prominent setting where the method has been applied to great success is that of GKM manifolds (named after \cite{GKM}) where $X_1$ is a graph of $2$-spheres and $H^*(X;A)$ as well as $H^*_T(X;A)$ can be described completely in terms of the graph. This method of computation is also referred to as the ``GKM method''. While the origins of this approach, as developed by Chang and Skjelbred \cite{ChangSkjelbred}, use coefficients in a field of characteristic $0$, the method is also promising for integral coefficients. Integral aspects of the theory have also led to instances where in low dimensions, the combinatorics of $X_1$ indeed encode the (equivariant) homeomorphism/diffeomorphism type \cite{IsabelleLiat}, \cite{GKZCorrespondence}, extending classical results on toric varieties being determined by their combinatorics.

The question under which topological assumptions the sequence (\ref{eq:CS}) becomes exact is subtle but well studied. When passing from rational to integral coefficients, additional assumptions need to be placed to either restrain torsion in $H^*(X;\ZZ)$ or the occurring disconnected isotropy groups.
Concretely Sequence (\ref{eq:CS}) is known to be exact if
\begin{enumerate}
\item $A=\QQ$ and $H^*_T(X;\QQ)$ is free over $H^*(BT;\QQ)$. The latter phenomenon is often labelled equivariant formality and is equivalent to surjectivity of $H^*_T(X;\QQ)\rightarrow H^*(X;\QQ)$. Hence the nonequivariant cohomology $H^*(X)$ is encoded in $X_1$ as well. This is the original form as proved by Chang and Skjelbred \cite{ChangSkjelbred}. It was shown in \cite[Corollary 1.3]{AFPSyzygies} That the freeness assumption can be relaxed to assuming $H^*_T(X;\QQ)$ to be reflexive.
\item $A=\ZZ$, $H^*_T(X;\ZZ)\rightarrow H^*(X;\ZZ)$ is surjective and additionally all isotropy groups are connected \cite[Theorem 1.1]{FP:ECSWICFTA}. We note that the surjectivity is implied by freeness of $H^*_T(X;\ZZ)$ over $H^*(BT;\ZZ)$ but the converse holds only if $H^*(X;\ZZ)$ is torsion free, which is not necessarily the case here.
\item $A=\ZZ$, $H^*_T(X;\ZZ)$ is free over $H^*(BT;\ZZ)$ and additionally the isotropy of each point outside of $X_1$ is contained in a proper subtorus of $T$ \cite[Corollary 2.2]{FP:ESFEFS}. The assumption on $H^*(BT;\ZZ)$-freeness in particular forces $H^*(X)$ to be torsion free.
\end{enumerate}

Furthermore, exactness of Sequence (\ref{eq:CS}) in the context of certain other cohomology theories has been discussed in \cite{BaiPomerleano}. Our aim is to extend the range of possible applications for the above method to an integral setting without any restraints on the occurring isotropy groups as well as to sharpen the boundaries of this approach with computational examples and counterexamples. Denoting by $T_n\subset T$ the subgroup of all elements of order $n$, our main result is
\begin{thm}\label{thm:MainThmIntro}
Let $X$ be a finite $T$-CW complex such that $H^*_T(X;\ZZ)$ is free over $H^*(BT;\ZZ)$ and additionally $H^*(X^{T_n};\ZZ)$ is torsion free for all $n\in \NN$. Then the image of the injection $H^*_T(X;\ZZ)\rightarrow H^*_T(X^T;\ZZ)$ is encoded in $X_1$.
\end{thm}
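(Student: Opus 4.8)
Concretely, the assertion that the image of $H^*_T(X;\ZZ)\to H^*_T(X^T;\ZZ)$ is encoded in $X_1$ is the exactness of the integral Chang--Skjelbred sequence (\ref{eq:CS}), equivalently the exactness at its first two terms of the Atiyah--Bredon complex
\[
0\to H^*_T(X;\ZZ)\to H^*_T(X^T;\ZZ)\to H^*_T(X_1,X^T;\ZZ)\to H^*_T(X_2,X_1;\ZZ)\to\cdots.
\]
Injectivity of the first map is immediate from the Localization Theorem: freeness makes $H^*_T(X;\ZZ)$ torsion free over $H^*(BT;\ZZ)$, and the kernel of restriction to $X^T$ is $H^*(BT;\ZZ)$-torsion. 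So the whole point is exactness at $H^*_T(X^T;\ZZ)$, i.e.\ that a class on $X^T$ which extends to $X_1$ already extends to $X$. I would first record the consequences of freeness used throughout: $H^*(X;\ZZ)$ is torsion free, $H^*_T(X;\ZZ_{(p)})$ is free over $H^*(BT;\ZZ_{(p)})$ for every prime $p$, and $X$ is $T$-equivariantly formal over $\QQ$ and over each $\ZZ/p$; and since the obstruction to exactness is a finitely generated $\ZZ$-module, one may treat each prime $p$ in turn.

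The function of the fixed-point hypothesis is to absorb exactly the isotropy that the known integral results cannot cope with. I would prove the group-theoretic lemma: if $x\notin X_1$ and $T_x$ lies in no proper subtorus of $T$, then $T_n\subseteq T_x$ for some $n>1$, so $x\in X^{T_n}$. Writing $T_x\cong(S^1)^k\times F$ with $F=\pi_0(T_x)$, the subtorus generated by $T_x$ has dimension at most $k+\mu(F)$, where $\mu(F)$ is the minimal number of generators of $F$; if it equals $\dim T$ then $\mu(F)\ge\operatorname{corank}(T_x)$, and as $\mu(F)$ is the maximum over primes $p$ of the $p$-rank of $F$, for a suitable $p$ one has $(\ZZ/p)^{\operatorname{corank}(T_x)}\subseteq F$, which with $(\ZZ/p)^k\subseteq(S^1)^k$ exhibits the full $p$-torsion subgroup $T_p$ of $T$ inside $T_x$. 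I would then check that each $X^{T_n}$ again satisfies the hypotheses: its $T_m$-fixed set is $(X^{T_n})^{T_m}=X^{T_{\operatorname{lcm}(m,n)}}$, with torsion-free cohomology by assumption; and Smith theory for the $p$-torus $T_p$, the rational and mod-$p$ Localization Theorems, and torsion freeness of $H^*(X^{T_p};\ZZ)$ together pin the total $\QQ$- and $\ZZ/p$-Betti numbers of $X^{T_p}$ to those of $(X^{T_p})^T=X^T$, so $X^{T_p}$ is $\QQ$- and $\ZZ/p$-equivariantly formal, and torsion freeness upgrades this to freeness of $H^*_T(X^{T_p};\ZZ)$ over $H^*(BT;\ZZ)$; the general case follows by induction on the number of prime factors of $n$.

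With these inputs the core is an induction, best set up for $T$-CW pairs, on the number of orbit types (after reducing to an effective action). If every point outside the one-skeleton has isotropy inside a proper subtorus, (\ref{eq:CS}) is exact by the subtorus case \cite{FP:ESFEFS}. Otherwise the lemma supplies a prime $p$ for which $W:=X^{T_p}$ is a proper invariant subspace containing such a point; $W$ has strictly fewer orbit types, satisfies the hypotheses and has one-skeleton inside $X_1$, so the statement holds for $W$ by induction, while the situation of $X$ relative to $W$ has strictly fewer of the offending orbit types and is handled by a further application of the induction in its relative form. Splicing the two through the long exact sequences of the pairs $(X,W)$ and $(X_1,X_1\cap W)$ and chasing the diagram then gives exactness at the first two terms for $X$.

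The step I expect to be the main obstacle is this final assembly, together with organizing the induction. The sets $X^{T_n}$ form a divisibility poset rather than a chain, so a given bad point lies in $X^{T_p}$ for only some $p$ and several primes may be needed; making the orbit-type count strictly decrease at every stage and setting up a clean relative statement for pairs (where ``freeness'' and the torsion-freeness conditions must be phrased relatively) is the delicate part. One also has to invoke the subtorus input in its $\ZZ_{(p)}$-local form and verify that the genuine torsion in $H^*_T(X_1,X^T;\ZZ)$ produced by one-dimensional orbits with disconnected isotropy --- the very phenomenon that forces the modification of the GKM graph cohomology formula --- passes through the diagram chase without obstructing exactness at the first two nodes.
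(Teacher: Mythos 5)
Your opening move --- identifying ``the image is encoded in $X_1$'' with exactness of the integral Chang--Skjelbred sequence (\ref{eq:CS}) --- is where the proposal goes wrong, and everything downstream inherits the problem. Under exactly the hypotheses of the theorem the sequence (\ref{eq:CS}) is in general \emph{not} exact: by Theorem \ref{thm:CS} the image of $H^*_T(X)\rightarrow H^*_T(X^T)$ is the intersection of the images coming from all $X^U$ with $U$ a codimension one subtorus \emph{or} a maximal $p$-torus, and by Remark \ref{rem:nonexact} the space $X_1$ only accounts for the former. The $S^6$ example (Example \ref{ex:notsimplify}) satisfies all hypotheses of the theorem, yet there $\im(H^*_T(X)\rightarrow H^*_T(X^T))=\langle 1,p^5qx_{123}\cdot\alpha\rangle_R$ is strictly smaller than the image of $H^*_T(X_1)$, which is $\langle 1,pqx_{123}\cdot\alpha\rangle_R$. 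So the statement you set out to prove --- that a class on $X^T$ extending to $X_1$ already extends to $X$ --- is false under the stated hypotheses, and no diagram chase through the pairs $(X,W)$ and $(X_1,X_1\cap W)$ can establish it. ``Encoded in $X_1$'' is meant in the weaker sense that the image can be \emph{computed from the $T$-space $X_1$} by a finite recursive procedure, not that it coincides with the image of $H^*_T(X_1)$.

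The correct route, and the one the paper takes (Theorem \ref{thm:MainThm}), is to accept the $p$-torus terms and recurse on them: setting $A_n=\im(H^*_{T/T_n}(X^{T_n})\rightarrow H^*_{T/T_n}(X^T))$, one applies the integral Chang--Skjelbred theorem to the $T/T_n$-action on $X^{T_n}$ to obtain $A_n=\bigl(\bigcap_p R_n\otimes_{R_{np}}A_{np}\bigr)\cap\bigl(\bigcap_K B_{n,K}\bigr)$, where the $B_{n,K}$ and the subspaces $(X_1)^{T_n}$ are visible in $X_1$, and the base-change identification $R_n\otimes_{R_{np}}A_{np}\cong\im(H^*_{T/T_n}(X^{T_{np}})\rightarrow H^*_{T/T_n}(X^T))$ is justified by a collapsing spectral-sequence argument. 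Your second paragraph does contain the genuinely needed input --- that each $X^{T_n}$ again satisfies the hypotheses for the quotient torus action, via the Betti-number count through rational and mod-$p$ localization and the torsion-freeness upgrade via Leray--Hirsch; this is Lemmas \ref{lem:bettinums} and \ref{lem:free} of the paper. But you deploy it in the service of an induction whose target is false. The group-theoretic lemma about isotropy groups not contained in proper subtori and the orbit-type induction are not the mechanism by which the maximal $p$-tori enter (they enter through the height-one primes $(p)\subset R$ in the localization argument behind Theorem \ref{thm:CS}), and the final ``splicing'' step cannot be repaired.
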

In fact in our setup the Chang-Skjelbred sequence (\ref{eq:CS}) is not exact. The reason is that over $\ZZ$, in order to compute the image of $H^*_T(X)\rightarrow H^*_T(X^T)$, one needs to consider not only $X_1$ but also the fixed point sets $X^{T_p}$ of the maximal $p$-tori for all primes $p\in \ZZ$ (see Theorem \ref{thm:CS} and Remark \ref{rem:nonexact}). Our aim, with a view towards GKM applications, is to nonetheless compute $H^*_T(X)$ from $X_1$ by first analysing the $X^{T_p}$ via their one-skeleta. This leads to an inductive procedure which alternatingly applies the Chang-Skjelbred method to quotient groups $T/T_n$ and extends coefficient to account for ineffectivity. The detailed algorithm is described in Theorem \ref{thm:MainThm}.

Regarding the new technical condition of the $H^*(X^{T_n};\ZZ)$ being torsion free, we observe (Lemma \ref{lem:symplecticTorsion}) that for Hamiltonian actions it is does not place additional restrictions as it is contained in the condition of freeness of the equivariant cohomology. We note that this is equivalent to $H^*(M^T;\ZZ)$ being torsion free, so it applies in particular to Hamiltonian actions with discrete fixed points.

\begin{cor}\label{cor:mainHamiltonian}
Let $M$ be a compact Hamiltonian $T$-manifold such that $H^*_T(M;\ZZ)$ is free over $H(BT;\ZZ)$. Then the image of the injection $H^*_T(M;\ZZ)\rightarrow H^*_T(M^T;\ZZ)$ is encoded in $M_1$.
\end{cor}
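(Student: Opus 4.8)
The plan is to deduce Corollary \ref{cor:mainHamiltonian} from Theorem \ref{thm:MainThmIntro} by checking its single extra hypothesis, namely that $H^*(M^{T_n};\ZZ)$ is torsion free for every $n\in\NN$. The theorem then applies verbatim to $X=M$ and yields the conclusion.

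First I would invoke Lemma \ref{lem:symplecticTorsion}, which (as announced in the discussion preceding the corollary) says precisely that for a compact Hamiltonian $T$-manifold $M$ whose equivariant cohomology $H^*_T(M;\ZZ)$ is free over $H^*(BT;\ZZ)$, the cohomology $H^*(M^{T_n};\ZZ)$ is torsion free for all $n$. This is the crux of the argument and, I expect, the step carrying the real content; everything else is bookkeeping. The point is that each fixed-point set $M^{T_n}$ is itself a compact Hamiltonian $T/T_n$-manifold (the torus $T_n$ being finite, $T/T_n$ is again a torus acting on $M^{T_n}$, with moment map the restriction of the original one), so torsion-freeness of its ordinary cohomology follows from the symplectic/Hamiltonian input rather than from any GKM-type combinatorial structure. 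In particular, applying the lemma to $n=1$ (so $T_1$ is trivial and $M^{T_1}=M$) recovers that $H^*(M;\ZZ)$ is torsion free, hence the freeness of $H^*_T(M;\ZZ)$ over $H^*(BT;\ZZ)$ is here equivalent to surjectivity of $H^*_T(M;\ZZ)\to H^*(M;\ZZ)$, i.e.\ to equivariant formality.

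Having verified that $M$ satisfies both hypotheses of Theorem \ref{thm:MainThmIntro} --- the $H^*(BT;\ZZ)$-freeness of $H^*_T(M;\ZZ)$ is assumed, and the torsion-freeness of all $H^*(M^{T_n};\ZZ)$ has just been established --- I would simply apply that theorem. It gives that the image of the injection $H^*_T(M;\ZZ)\to H^*_T(M^T;\ZZ)$ is encoded in $M_1$, which is exactly the assertion of the corollary. (A compact manifold with a smooth torus action admits a finite $T$-CW structure, so the ``finite $T$-CW complex'' requirement of the theorem is met.) The main obstacle, then, is entirely contained in Lemma \ref{lem:symplecticTorsion}: one must know that Hamiltonian-ness propagates to the fixed-point sets of the finite subgroups $T_n$ and that the resulting spaces have torsion-free cohomology under the equivariant-freeness assumption. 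Once that lemma is in hand, the corollary is immediate.
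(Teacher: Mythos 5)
Your proposal is correct and follows exactly the paper's route: the paper also obtains the corollary by combining Lemma \ref{lem:symplecticTorsion} (which gives the torsion-freeness of every $H^*(M^{T_n};\ZZ)$ from the $R$-freeness hypothesis in the Hamiltonian setting) with Theorem \ref{thm:MainThm}. Your supporting remarks about why the lemma holds (each $M^{T_n}$ is again a compact Hamiltonian manifold with the same fixed-point set) likewise match the paper's proof of that lemma.
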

On the other hand we show that, in general, the condition of $H^*_T(X;\ZZ)$ being free over $H^*(BT;\ZZ)$ is not sufficient to guarantee the success of the algorithm from Theorem \ref{thm:MainThm} without the additional condition of the $H^*(X^{T_n};\ZZ)$ being torsion free. We provide a counterexample in Example \ref{ex:counter}.

The rise of the popularity of the Chang-Skjelbred method partly comes from its computability in the case of a GKM manifold $M$. Under the right assumptions, $H^*_T(M;A)$ can be expressed via a combinatorial formula as the ``graph cohomology'' $H^*_T(\Gamma;A)$ of a labelled graph $\Gamma$ associated with the action, the so-called GKM graph (see Section \ref{sec:GKM}). In our setup $H^*_T(\Gamma;\ZZ)\cong H^*_T(M;\ZZ)$ no longer holds due to Sequence (\ref{eq:CS}) failing to be exact (see Example \ref{ex:notsimplify}). Our method translates into a modified version $\widehat{H}^*_T(\Gamma)$ of graph cohomology, which we define in Section \ref{sec:GKM}. Theorem \ref{thm:MainThmIntro} and Corollary \ref{cor:mainHamiltonian} then translate into
\begin{thm}
Let $M$ be an integer GKM manifold such that $H^*(M^{T_n};\ZZ)$ is torsion free for all $n\in \NN$ (which is automatic if the action is Hamiltonian). Then
\[H^*_T(M;\ZZ)\cong \widehat{H}^*_T(\Gamma).\]
\end{thm}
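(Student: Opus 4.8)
The plan is to reduce the statement to the purely topological Theorem~\ref{thm:MainThmIntro} (equivalently to the algorithm of Theorem~\ref{thm:MainThm}) by identifying the combinatorial object $\widehat{H}^*_T(\Gamma)$ with the output of that algorithm. First I would recall the standing GKM hypotheses which guarantee that $M^T$ is discrete, that $H^*_T(M;\ZZ)$ is free over $H^*(BT;\ZZ)$, and that the one-skeleton $M_1$ is, up to the isotropy data, governed by the labelled graph $\Gamma$: the vertices are the fixed points, the edges are the invariant $2$-spheres, and each edge carries a weight together with (in the integer/disconnected setting) the finite isotropy group of the generic point on that sphere. Under these hypotheses the hypothesis ``$H^*(M^{T_n};\ZZ)$ torsion free for all $n$'' is exactly the extra condition of Theorem~\ref{thm:MainThmIntro}, and the parenthetical remark about Hamiltonian actions is immediate from Lemma~\ref{lem:symplecticTorsion} together with the observation that each $M^{T_n}$ is again a compact Hamiltonian $T/T_n$-manifold (so the cited torsion-freeness of $H^*(M^T;\ZZ)$ propagates).

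The core of the argument is then to show that the definition of $\widehat{H}^*_T(\Gamma)$ from Section~\ref{sec:GKM} is set up precisely so that it computes, step by step, the same subgroup of $H^*_T(M^T;\ZZ)=\bigoplus_{p\in M^T} H^*(BT;\ZZ)$ that the inductive procedure of Theorem~\ref{thm:MainThm} produces. Concretely I would run the induction on the poset of subgroups $T_n$: at each stage the algorithm applies a Chang-Skjelbred-type computation to the quotient action of $T/T_n$ on $M^{T_n}$ and then extends coefficients to absorb ineffectivity. On the graph side, $M^{T_n}$ corresponds to the subgraph $\Gamma^{T_n}$ spanned by those edges whose isotropy contains $T_n$ (with the induced $T/T_n$-weights), so each step of the algorithm reads off as imposing the GKM compatibility relations along the edges of $\Gamma^{T_n}$ for the quotient torus, followed by the prescribed coefficient extension; iterating over all $n$ and intersecting the resulting constraints is, by construction, the definition of $\widehat{H}^*_T(\Gamma)$. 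Matching the algorithm's output with this intersection is mostly bookkeeping once the dictionary $M^{T_n}\leftrightarrow\Gamma^{T_n}$ and $M^{T_n}_1\leftrightarrow$ (edges of $\Gamma^{T_n}$) is pinned down, using that for a GKM manifold the one-skeleton of each $M^{T_n}$ is again GKM for $T/T_n$.

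Having established that the algorithm computes $\widehat{H}^*_T(\Gamma)$, the theorem follows: Theorem~\ref{thm:MainThmIntro} (via its refinement Theorem~\ref{thm:MainThm}) says that under the freeness of $H^*_T(M;\ZZ)$ and the torsion-freeness of all $H^*(M^{T_n};\ZZ)$, the algorithm's output equals the image of the injection $H^*_T(M;\ZZ)\hookrightarrow H^*_T(M^T;\ZZ)$, hence is isomorphic as a ring to $H^*_T(M;\ZZ)$; combining the two identifications gives $H^*_T(M;\ZZ)\cong\widehat{H}^*_T(\Gamma)$. The main obstacle I anticipate is not any deep topology — that is all outsourced to the earlier theorems — but the careful verification that the one-skeleton $M^{T_n}_1$ of the quotient action is correctly modelled by the edge set of $\Gamma^{T_n}$, including that no ``new'' invariant $2$-spheres or fixed components appear in $M^{T_n}$ beyond those predicted combinatorially, and that the isotropy labels transform correctly under passing to $T/T_n$; this is where the GKM hypotheses on $M$ must be used in full, and it is the step most prone to hidden case distinctions when the isotropy groups are disconnected.
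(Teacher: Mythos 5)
Your overall strategy coincides with the paper's: show by induction on $n$ that the module $A_n$ produced by the algorithm of Theorem~\ref{thm:MainThm} equals $\widehat{H}^*_{T/T_n}(\Gamma_n)$, using Lemma~\ref{lem:free} to see that each $M^{T_n}$ is again a union of integer GKM manifolds for $T/T_n$ with graph $\Gamma_n$, and the identification $\bigcap_K B_{n,K}=H^*_{T/T_n}(\Gamma_n)$ (over codimension-one subtori $K$, i.e.\ Theorem~\ref{thm:GKM} applied to the quotient action) to handle the one-skeleton part. Your treatment of the Hamiltonian parenthetical via Lemma~\ref{lem:symplecticTorsion} also matches the paper.

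There is, however, one concrete step you dismiss as ``bookkeeping'' that actually requires an argument, and it is the only genuinely computational part of the paper's proof: the index sets of primes in the two recursions do not match. Theorem~\ref{thm:MainThm} intersects over all primes $p$ with $np\in S$, i.e.\ with $M^{T_{np}}\neq M^T$, which already happens as soon as $np$ divides the weight of a \emph{single} edge; the definition of $\widehat{H}^*_{T/T_n}(\Gamma_n)$ only intersects over those $p$ for which $np$ divides the weights of two distinct \emph{adjacent} edges. To reconcile the two one must show the omitted terms are redundant: when $np$ divides no two adjacent weights, $M^{T_{np}}$ is a disjoint union of isolated $2$-spheres and points, one computes $A_{np}$ explicitly (it is generated by the characteristic functions of isolated vertices together with, for each isolated edge, the classes $(1,1)$ and $(0,\alpha)$ supported on its endpoints), and one checks that $R_n\otimes_{R_{np}}A_{np}\supset H^*_{T/T_n}(\Gamma_n)$, so these factors drop out of the intersection. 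Without this verification the asserted equality $A_n=\widehat{H}^*_{T/T_n}(\Gamma_n)$ does not follow from your induction; with it, your plan goes through exactly as in the paper.
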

We note that while $H^*_T(\Gamma)$ is computed via a ``closed formula'', the definition of $\widehat{H}^*_T(\Gamma)$ relies on a finite number of recursive computations and does not seem to directly simplify into a closed expression (see the discussion in Example \ref{ex:notsimplify}).

The structure of the article is as follows: In Section $2$ we fix some notation and recall the integral version of the Chang-Skjelbred-Lemma which forms the basis of our considerations. Section $3$ is concerned with the proof of Theorem \ref{thm:MainThmIntro} as well as symplectic applications. These are applied to the GKM case in Section $4$. The final Section $5$ is concerned with the construction of counterexamples.

\paragraph*{Acknowledgements.} The author wants to thank Matthias Franz for helpful comments.

\section{Notation and preliminaries}

Throughout the paper $H^*(-)$ denotes singular cohomology. If no coefficients are specified they are taken in $\ZZ$.
For a topological group $G$ there is the universal $G$-Bundle $G\rightarrow EG\rightarrow BG$. For a $G$-space $X$ we associate the Borel fibration
\[X\rightarrow X_G\rightarrow BG\]
where $X_G$ is the quotient of $EG\times X$ by the diagonal $G$-action. The equivariant cohomology with coefficients in a ring $A$ is defined as $H^*_G(X;A)=H^*(X_G;A)$. The universal $G$-bundles may be constructed functorially in $G$ (see \cite{Milnor}) such that $H^*_G(X;A)$ becomes functorial in both $G$ and $X$. For us, the case of a compact torus $T$ is the most important. We set $R=H^*(BT)$. One has

\[R= \ZZ[x_1,\ldots,x_r]\]
where $r=\dim T$ and the $x_i$ are of degree $2$. We denote by $T_n\cong \ZZ_n^r$ the subgroup of $T$ of all elements of order $n$. For $n=p$ a prime we also call $T_p$ the maximal $p$-torus. The quotient $T/T_n$ is again a torus and we set $R_n=H^*(B(T/T_n))$. The projection $T\rightarrow T/T_n$ induces an injection $R_n\rightarrow R$ via which we identify $R_n$ as the subring
\[R_n= \ZZ[nx_1,\ldots,nx_r].\]

We will be making use of the classical Borel localization theorem for which we refer the reader to \cite[Theorem 3.1.6]{AP}. This is responsible for restricting to finite $T$-CW-complexes in the main results. We note that those assumptions can be somewhat weakened as is discussed in \cite[Section 3.2]{AP}. An important consequence of Borel localization is that when $X$ is a finite $T$-CW-complex and $H^*_T(X)$ is free over $R$ then the restriction $H^*_T(X)\rightarrow H^*_T(X^T)$ is injective. Its image is described by

\begin{thm}[Chang-Skjelbred]\label{thm:CS}
Let $X$ be a finite $T$-CW-complex and Assume that the image of $H^*_T(X)$ in $H^*_T(X^T)$ is free over $R$. Then
\[\im(H^*_T(X)\rightarrow H^*_T(X^T))=\bigcap_U \im (H^*_T(X^U)\rightarrow H^*_T(X^T))\]
where $U$ ranges over all codimension $1$ subtori as well as all maximal $p$-tori for all primes.
\end{thm}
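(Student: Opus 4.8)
The plan is to deduce this integral Chang--Skjelbred statement from Borel localization by a comparison of the two sides via the exact sequence of the pair. First I would recall the standard setup: since the image $I := \im(H^*_T(X)\to H^*_T(X^T))$ is assumed free over $R$, Borel localization tells us that $H^*_T(X)\to H^*_T(X^T)$ becomes an isomorphism after inverting enough elements of $R$ — more precisely, localizing at a multiplicative set kills the kernel and cokernel because they are supported on a proper closed subset of $\operatorname{Spec} R$. The inclusion ``$\subseteq$'' is essentially formal: for each subtorus $U$ in the indexing family, the inclusions $X^T\hookrightarrow X^U\hookrightarrow X$ give a factorization $H^*_T(X)\to H^*_T(X^U)\to H^*_T(X^T)$, so $I$ is contained in $\im(H^*_T(X^U)\to H^*_T(X^T))$ for every such $U$, hence in the intersection.

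The substance is the reverse inclusion ``$\supseteq$''. Here I would argue prime by prime, or rather by localizing the whole picture at each prime ideal of $R$. The key input is the following local statement: if $\mathfrak p$ is a prime of $R$, then after localizing at $\mathfrak p$ the map $H^*_T(X)_{\mathfrak p}\to H^*_T(X^U)_{\mathfrak p}$ is an isomorphism whenever $U$ is a subtorus whose ``support'' (the kernel locus of the corresponding character lattice) contains $V(\mathfrak p)$ — this is exactly the localization theorem applied relative to $U$, using that $X^U$ absorbs all orbits whose isotropy identity component or relevant $p$-part is ``large enough'' at $\mathfrak p$. Concretely one stratifies $X$ by orbit type; the strata not contained in $X^U$ contribute torsion that is annihilated after localizing at $\mathfrak p$, provided $U$ is chosen as a codimension-one subtorus (for height-one primes $(f)$ with $f$ a linear form, i.e.\ the rational directions) or as a maximal $p$-torus (for the prime $\mathfrak p$ containing $p$). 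So for each $\mathfrak p$ there is at least one $U$ in the family with $H^*_T(X)_{\mathfrak p}\xrightarrow{\ \sim\ }H^*_T(X^U)_{\mathfrak p}$, whence $I_{\mathfrak p} = \im(H^*_T(X^U)\to H^*_T(X^T))_{\mathfrak p} \supseteq \bigl(\bigcap_U \im(\cdots)\bigr)_{\mathfrak p}$.

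To turn these local statements into the global equality I would use that $I$ is free, hence reflexive, over $R$, so $I = \bigcap_{\operatorname{ht}\mathfrak p = 1} I_{\mathfrak p}$ inside $I\otimes_R \operatorname{Frac}(R)$; and similarly the intersection $\bigcap_U \im(H^*_T(X^U)\to H^*_T(X^T))$ can be tested at height-one primes after checking it also injects into the rationalization (which follows since $H^*_T(X^T)$ is free over $R$, being a polynomial ring tensor the torsion-free group $H^*(X^T)$ — or one reduces to the classical rational statement for the generic point). Thus it suffices to prove the equality after localizing at each height-one prime $\mathfrak p$, and at such a $\mathfrak p$ the previous paragraph produces a single $U$ realizing the image, giving the desired inclusion $\bigl(\bigcap_U\cdots\bigr)_{\mathfrak p}\subseteq I_{\mathfrak p}$.

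The main obstacle, and the place requiring genuine care rather than formal manipulation, is the local isomorphism $H^*_T(X)_{\mathfrak p}\xrightarrow{\sim} H^*_T(X^U)_{\mathfrak p}$ with the \emph{correct} choice of $U$ at a prime dividing some integer $p$: the rational localization theorem would only see the identity component of isotropy groups, whereas over $\ZZ_{(p)}$ one must account for the $p$-torsion subgroups of isotropy, which is precisely why the maximal $p$-tori $T_p$ must be included in the family. I would handle this by the standard $\ZZ/p$-localization argument (Quillen/Hsiang-style): filtering $X$ by orbit types and using that for a $T$-space whose every orbit has isotropy containing $T_p$ up to finite index prime to $p$, the pair $(X, X^{T_p})$ has equivariant cohomology that is $p$-torsion, hence dies after inverting everything in $R\smallsetminus\mathfrak p$ when $p\in\mathfrak p$. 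Writing this filtration argument cleanly — matching the two indexing families (codimension-one subtori ``vs.'' maximal $p$-tori) to the two types of height-one primes of $R = \ZZ[x_1,\dots,x_r]$ (those generated by a prime $p$, and those generated by a primitive linear form) — is the technical heart of the proof.
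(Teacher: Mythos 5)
Your proposal follows essentially the same route as the paper, which does not reprove this statement but refers to Anderson--Fulton and describes the argument exactly as you outline it: the formal inclusion, Borel localization, reflexivity of the free image $I$, and the matching of the height-one primes of $R=\ZZ[x_1,\ldots,x_r]$ (those generated by a primitive linear form versus those generated by an integer prime $p$) with the codimension-one subtori and the maximal $p$-tori respectively. One caveat: your justification that the intersection $\bigcap_U\im(H^*_T(X^U)\to H^*_T(X^T))$ injects into its rationalization relies on $H^*(X^T)$ being torsion free, which is not among the stated hypotheses (only the image of $H^*_T(X)$ in $H^*_T(X^T)$ is assumed free over $R$); in the paper's applications $X^T$ is finite or has torsion-free cohomology, but in the stated generality this reduction step should be phrased so as to use only freeness of the image, or the hypotheses matched to those of the cited source.
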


The original version of Chang and Skjelbred \cite[Lemma 2.3]{ChangSkjelbred} used rational coefficients. The main difference to the integral version above is the occurrence of the maximal $p$-tori which are absent in the rational version (due to the fact that, unlike in the rational case, primes in $\ZZ$ have to be considered as meaningful divisors of elements in $H^2(BT;\ZZ)$). A proof of the above version can be found in \cite[Theorem 7.3.4]{AndersonFulton}, where it is formulated for varieties. The proof however relies only on the Borel localization theorem and works analogously to the proof of the rational version while additionally accounting for the primes in $\ZZ$.

\begin{rem}\label{rem:nonexact}
Since $X_1$ consists of all $X^U$ for all codimension $1$ subtori $U\subset T$ which are glued along $X^T$ an excision argument yields $H^*(X_1,X_T)\cong \bigoplus_{U\subset T} H^*(X^U,X^T)$. Consequently, the image of $H^*_T(X_1)\rightarrow H^*_T(X^T)$ is equal to the intersection in Theorem \ref{thm:CS} but with $U$ only running over codimension $1$ tori and not the maximal $p$-tori. It follows that, under the assumptions of Theorem \ref{thm:CS}, the Chang-Skjelbred-sequence (\ref{eq:CS}) is exact if and only if the maximal $p$-tori do not contribute to the intersection.
\end{rem}

\section{An inductive method}

Throughout this section $X$ is a finite $T$-CW-complex. The idea is to impose conditions under which the subspaces $X^{T_p}$ arising in the statement of Theorem \ref{thm:CS} do again satisfy the assumptions needed to apply Theorem \ref{thm:CS}. While the statement of Theorem \ref{thm:CS} is empty when applied to the ineffective $T$-action on $X^{T_p}$, this is no longer necessarily the case after passing to the $T/T_p$-action. Our assumptions will furthermore ensure that the passage between the $T$- and the $T/T_P$-action can be tracked on the level of cohomology. This results in a finite number of recursive computations which at every step only require information contained in the one-skeleton $X_1$. We begin by sorting out technicalities on the assumptions.

\begin{lem}\label{lem:bettinums}
Assume that $H^*_T(X)$ is free over $R$. Then the total Betti numbers of $X$ and any $X^{T_p}$ agree for any prime $p$.
\end{lem}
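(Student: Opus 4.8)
The plan is to compare the total Betti numbers of $X$ with those of $X^T$ first, and then the total Betti numbers of $X^{T_p}$ with those of $(X^{T_p})^T = X^T$, so that everything is routed through the common fixed point set $X^T$. Since $H^*_T(X)$ is free over $R$, the Borel localization theorem (as quoted after Theorem \ref{thm:CS}) gives that the restriction $H^*_T(X)\to H^*_T(X^T)$ is injective, and a standard consequence of equivariant formality over $\ZZ$ is that $H^*_T(X)\cong H^*(X)\otimes R$ as $R$-modules (this follows from freeness: the Serre spectral sequence of $X\to X_T\to BT$ collapses). Hence $\dim_\QQ (H^*_T(X)\otimes_\ZZ\QQ)$ in each degree is controlled, and comparing ranks over $R$ of $H^*_T(X)$ and $H^*_T(X^T)$ yields that the total rational Betti number of $X$ equals that of $X^T$. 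I would phrase this cleanly by tensoring everything with $\QQ$ and invoking the rational localization theorem, which gives $\dim_\QQ H^*(X;\QQ) = \dim_\QQ H^*(X^T;\QQ)$ (the total rational Betti numbers agree) whenever the equivariant cohomology is free; freeness over $R$ certainly implies freeness over $R\otimes\QQ$.

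Next I want to run the same argument with $T$ replaced by the action on $X^{T_p}$. The key point is that freeness of $H^*_T(X)$ over $R$ must be inherited by $H^*_T(X^{T_p})$. This is where I expect the main obstacle to lie. The clean way to see it: the inclusion $X^{T_p}\hookrightarrow X$ and the localization theorem with respect to the maximal $p$-torus $T_p$ — but more directly, one uses that $X^{T_p}$ is itself a finite $T$-CW-complex with $(X^{T_p})^T = X^T$, and that the composite $H^*_T(X)\to H^*_T(X^{T_p})\to H^*_T(X^T)$ is the restriction map for $X$, which is injective with free image. If one knows additionally that $H^*_T(X^{T_p})\to H^*_T(X^T)$ is injective — which again follows from Borel localization once we know $H^*_T(X^{T_p})$ is free, creating an apparent circularity — then the image of $H^*_T(X^{T_p})$ in $H^*_T(X^T)$ sits between the free module $\im(H^*_T(X))$ and $H^*_T(X^T)$. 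To break the circularity I would instead argue $p$-locally: by the localization theorem for the finite $p$-group $T_p$ (or rather its $\ZZ_{(p)}$-coefficient version), $H^*_T(X;\ZZ_{(p)})\to H^*_T(X^{T_p};\ZZ_{(p)})$ becomes an isomorphism after inverting the appropriate elements, and since the source is free over $R\otimes\ZZ_{(p)}$, the $\ZZ_{(p)}$-Betti numbers of $X$ and $X^{T_p}$ agree. Combined with the first paragraph (rational Betti numbers of $X$, of $X^T$, and hence — applying the first paragraph to $X^{T_p}$ once freeness of $H^*_T(X^{T_p})$ over $R$ is in hand — of $X^{T_p}$ all agree), and the fact that total $\ZZ_{(p)}$-Betti numbers always dominate total $\QQ$-Betti numbers with equality iff there is no $p$-torsion, a counting argument forces equality of total Betti numbers: the total $\ZZ_{(p)}$-Betti number of $X$ equals its rational one (no $p$-torsion, by freeness over $R$ which forces $H^*(X)$ torsion free), this equals the rational Betti number of $X^{T_p}$, which is at most its $\ZZ_{(p)}$-Betti number, which equals that of $X$ — so all inequalities are equalities and $H^*(X^{T_p})$ has no $p$-torsion either; running over all $p$ gives torsion-freeness and the equality of total Betti numbers.

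The cleanest self-contained route, which is probably what I would actually write, avoids the circularity entirely: work one prime at a time over $\ZZ_{(p)}$, use that $H^*_T(X;\ZZ_{(p)})$ free over $R\otimes \ZZ_{(p)}$ implies (via the collapsing Serre spectral sequence) $H^*(X;\ZZ_{(p)})$ is free over $\ZZ_{(p)}$ of total rank $= b(X) := \sum_i \dim_\QQ H^i(X;\QQ)$, then invoke the $\ZZ_{(p)}$-Borel localization theorem for the elementary abelian $p$-group $T_p$ acting on the finite CW-complex $X$ to get $\sum_i \dim_{\ZZ/p} H^i(X^{T_p};\ZZ/p) \le \sum_i \dim_{\ZZ/p} H^i(X;\ZZ/p) = b(X)$, while the opposite inequality $b(X^{T_p}) \le \sum_i \dim_{\ZZ/p} H^i(X^{T_p};\ZZ/p)$ is the universal coefficient bound; finally the reverse direction — that $b(X) \le b(X^{T_p})$ — comes from applying the rational localization theorem to the circle subgroups / to $T$ acting on $X$ versus $X^{T_p}$, or simply from the fact that the $T$-equivariant restriction $H^*(X;\QQ)\leftarrow \ldots$ combined with $X^T \subseteq X^{T_p}$ gives $b(X) = b(X^T) \le b(X^{T_p})$ when $X^{T_p}$ is rationally equivariantly formal, which it is since $H^*_T(X^{T_p};\QQ)$ receives a free module as the image of restriction and sits inside $H^*_T(X^T;\QQ)$. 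Chaining $b(X) = b(X^T) \le b(X^{T_p}) \le \sum \dim_{\ZZ/p}H^*(X^{T_p};\ZZ/p) \le b(X)$ collapses all terms, proving both the Betti number equality and, as a bonus, that there is no $p$-torsion in $H^*(X^{T_p})$. The main obstacle is the bookkeeping to ensure the localization-theorem inequalities are applied to the correct group ($T_p$ over $\ZZ_{(p)}$, $T$ over $\QQ$) and in the correct direction; once the chain of (in)equalities is set up, it pinches shut automatically.
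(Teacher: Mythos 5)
Your ``cleanest self-contained route'' in the final paragraph is essentially the paper's own proof: the same pinching chain $b(X)=b(X^T)\le b(X^{T_p})\le \dim_{\ZZ/p}H^*(X^{T_p};\ZZ/p)\le\dim_{\ZZ/p}H^*(X;\ZZ/p)=b(X)$, using torsion-freeness of $H^*(X)$, the mod-$p$ localization theorem for $T_p$, the universal coefficient bound, and the rational localization theorem for the $T$-action on $X^{T_p}$. The earlier paragraphs' worries about circularity are unnecessary, but the final argument is correct and matches the paper.
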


\begin{proof}
We have \[\dim_\QQ H^*(X;\QQ)=\dim_{\ZZ_p} H^*(X;\ZZ_p)\]
due to torsion freeness of $H^*(X)$.
The localization theorem applied to the $T_p$-action with $\ZZ_p$ coefficients implies that \[\dim_{\ZZ_p}H^*(X;\ZZ_p)\geq\dim_{\ZZ_p}H(X^{T_p};\ZZ_p)\] (see e.g.\ \cite[Theorem 3.10.4]{AP})
and the universal coefficient theorem yields \[\dim_{\ZZ_p} H^*(X^{T_p};\ZZ_p)\geq \dim_\QQ H^*(X^{T_p};\QQ).\]
An application of the localization theorem for the $T$-action on $X^{T_p}$ yields \[\dim_\QQ H^*(X^{T_p};\QQ)\geq \dim_\QQ H^*(X^T;\mathbb{Q})= \dim_\QQ H^*(X;\mathbb{Q})\] where the last equality follows from freeness of $H^*_T(X)$ over $R$. Hence \[\dim_\QQ H^*(X^{T_p};\QQ)= \dim_\QQ H^*(X;\mathbb{Q}).\]
\end{proof}

\begin{lem}\label{lem:free}
Assume that $H^*_T(X)$ is a free $R$-module and that for all $n\in \mathbb{N}$ the ordinary cohomology $H^*(X^{T_n})$ is torsion free. Then the $R_n$-module $H^*_{T/T_n}(X^{T_n})$ is free for any $n\in \mathbb{N}$.
\end{lem}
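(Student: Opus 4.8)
The plan is to induct on the number of prime factors of $n$ counted with multiplicity; the case $n=1$ is the hypothesis, since $X^{T_1}=X$ and $T/T_1=T$. For the inductive step, write $n=p\,n'$ with $p$ prime and set $G'=T/T_{n'}$. One checks that $T_n/T_{n'}$ is the maximal $p$-torus of the torus $G'$, that $(X^{T_{n'}})^{T_n/T_{n'}}=X^{T_n}$ and, for every $m$, $(X^{T_{n'}})^{(G')_m}=X^{T_{mn'}}$, and that $G'/(G')_p=T/T_n$ with $H^*(B(G'/(G')_p))=R_n$. Since $H^*_{G'}(X^{T_{n'}})$ is free over $R_{n'}=H^*(BG')$ by induction and all the $H^*(X^{T_{mn'}})$ are torsion free by hypothesis, it suffices to prove the following local statement and apply it to $(G,Z)=(G',X^{T_{n'}})$: \emph{if $G$ is a torus, $Z$ a finite $G$-CW-complex with $H^*_G(Z)$ free over $H^*(BG)$ and $H^*(Z^{G_m})$ torsion free for all $m$, then $H^*_{G/G_p}(Z^{G_p})$ is free over $H^*(B(G/G_p))$.}

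Write $W=Z^{G_p}$ and $\bar G=G/G_p$, so the $G$-action on $W$ factors through the torus $\bar G$ and $H^*(W)$ is torsion free. By the Leray--Hirsch theorem for the Borel fibration $W\to W_{\bar G}\to B\bar G$ it is enough to show the restriction $\rho\colon H^*_{\bar G}(W)\to H^*(W)$ is onto. I would get this from the claim that $\bar G$ acts equivariantly formally on $W$ over every field $k\in\{\QQ\}\cup\{\ZZ_\ell\mid \ell \text{ prime}\}$, i.e.\ that $\rho\otimes k$ is onto. Granting the claim, collapse of the Serre spectral sequences over each $k$ gives $\dim_k H^n_{\bar G}(W;k)=\sum_{i+j=n}\dim_k H^i(B\bar G;k)\cdot\dim_k H^j(W;k)$, which is independent of $k$ ($H^*(B\bar G)$ is a polynomial ring and $H^*(W)$ is torsion free); hence, by the Bockstein spectral sequence, $H^*_{\bar G}(W)$ is torsion free. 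The universal coefficient theorem then identifies $\rho\otimes k$ with the $k$-coefficient restriction, so $\mathrm{coker}(\rho)$ is a finitely generated abelian group that dies after $\otimes k$ for every $k$ and therefore vanishes. A final application of Leray--Hirsch over $\ZZ$ makes $H^*_{\bar G}(W)$ free over $H^*(B\bar G)$.

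For equivariant formality over $\QQ$ note $W^{\bar G}=Z^G$ and, by Lemma~\ref{lem:bettinums} applied to $G\curvearrowright Z$ together with freeness, $\dim_\QQ H^*(Z^G;\QQ)=\dim_\QQ H^*(Z;\QQ)=\dim_\QQ H^*(Z^{G_p};\QQ)=\dim_\QQ H^*(W;\QQ)$, which is the equivariant formality criterion. For $\ZZ_\ell$ I would invoke the standard reduction that a torus $H$ acting on a finite $H$-CW-complex $Y$ is equivariantly formal over $\ZZ_\ell$ iff $\dim_{\ZZ_\ell}H^*(Y^{H_\ell};\ZZ_\ell)=\dim_{\ZZ_\ell}H^*(Y;\ZZ_\ell)$ for the maximal $\ell$-torus $H_\ell$ --- valid because $H/H_\ell$ is totally non-homologous to zero over $\ZZ_\ell$ in $BH_\ell\to BH$, hence in the pullback $Y_{H_\ell}\to Y_H$, reducing to the Smith criterion for the elementary abelian group $H_\ell$. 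Here $\bar G_\ell=(G/G_p)_\ell=G_{\ell p}/G_p$, so $W^{\bar G_\ell}=Z^{G_{\ell p}}$. Using Lemma~\ref{lem:bettinums} and torsion-freeness of $H^*(Z^{G_p})$ one gets $\dim_{\ZZ_\ell}H^*(W;\ZZ_\ell)=\dim_\QQ H^*(Z^{G_p};\QQ)=\dim_\QQ H^*(Z;\QQ)$; this quantity bounds $\dim_{\ZZ_\ell}H^*(Z^{G_{\ell p}};\ZZ_\ell)$ from above by mod-$\ell$ Smith theory for $\bar G_\ell\cong(\ZZ_\ell)^{\dim \bar G}$ acting on $W$, and from below via the universal coefficient theorem, the localization theorem for $G\curvearrowright Z^{G_{\ell p}}$ (whose fixed set is $Z^G$), and freeness of $H^*_G(Z)$; so the two dimensions agree.

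The step I expect to be the crux is the middle one: converting surjectivity of $\rho$ over $\ZZ$ into the field-by-field statements hinges on the fact that equivariant formality over all residue fields of $\ZZ$, for a space whose ordinary cohomology is torsion free, already forces the integral equivariant cohomology to be torsion free --- and this is precisely where the hypothesis must be known for $H^*(X^{T_m})$ with all $m$, since the auxiliary fixed sets $W^{\bar G_\ell}=Z^{G_{\ell p}}$ run over fixed sets of exactly this shape. The remaining work --- the group-theoretic identities $\bar G_\ell=G_{\ell p}/G_p$, $W^{\bar G_\ell}=Z^{G_{\ell p}}$ and the chains of (in)equalities --- is routine but needs to be assembled carefully.
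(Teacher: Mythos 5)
Your proof is correct, and its outer structure --- induction on the number of prime factors, reduction to the single-prime statement for the pair $(T/T_{n'},X^{T_{n'}})$, Lemma \ref{lem:bettinums} as the rational input, and Leray--Hirsch to convert collapse/surjectivity into freeness --- is the same as the paper's. Where you diverge is the core step for a single prime $p$, and there the paper is much lighter: since $H^*(X^{T_p})$ is torsion free, the integral $E_2$-page $H^*(B(T/T_p))\otimes H^*(X^{T_p})$ of the Borel spectral sequence is torsion free, so every integral differential vanishes as soon as it vanishes rationally; rational collapse is exactly the identity $\dim_\QQ H^*(X^{T_p};\QQ)=\dim_\QQ H^*(X^T;\QQ)$ supplied by Lemma \ref{lem:bettinums}, and one is done. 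You instead establish equivariant formality of the $T/T_p$-action on $X^{T_p}$ over \emph{every} prime field $\ZZ_\ell$ --- via the mod-$\ell$ Smith inequality for the maximal $\ell$-torus of $T/T_p$ (whose fixed set is $X^{T_{\ell p}}$), the reduction from a torus to its maximal $\ell$-torus, and a localization argument for the matching lower bound --- and then reassemble these field-by-field statements into integral surjectivity through a torsion-freeness/UCT argument. This is valid, and the identifications $(T/T_p)_\ell=T_{\ell p}/T_p$ and $(X^{T_p})^{(T/T_p)_\ell}=X^{T_{\ell p}}$ are right, but the whole mod-$\ell$ detour is unnecessary once one notices that a torsion-free $E_2$-page transports rational collapse to integral collapse. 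What your route buys is an explicit view of where the auxiliary fixed sets $X^{T_{\ell p}}$ would enter a purely mod-$\ell$ analysis; what it costs is length and a dependence on the ``standard reduction'' (equivalence of mod-$\ell$ equivariant formality for a torus and for its maximal $\ell$-torus), which you only sketch --- the Leray--Hirsch argument for the fibre $H/H_\ell$ of $Y_{H_\ell}\to Y_H$ that you indicate does work, but it should be carried out or precisely cited if this version of the proof were to stand.
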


\begin{proof}
Assume first that $n=p$ is a prime. By assumption $H^*(X^{T_p})$ is torsion free so the collapse of the associated Serre spectral sequences of
\[X^{T_p}\rightarrow X^{T_p}_T\rightarrow BT\]
is equivalent over $\ZZ$ and $\QQ$. Furthermore for both coefficients this is equivalent to freeness of the total space cohomology over the base cohomology by the Leray--Hirsch theorem.

It  hence suffices to prove that the Serre spectral sequence of the Borel fibration of the $T/T_p$-action collapses rationally. As $(X^{T_p})^{T/T_p}=X^T$, this is equivalent to $\dim_\QQ H^*(X^{T_p};\QQ)=\dim_\QQ H^*(X^T;\QQ)$ (again see e.g.\ \cite[Theorem 3.10.4]{AP}). But by Lemma \ref{lem:bettinums} we have \[\dim_\QQ H^*(X^{T_p};\QQ)=\dim_\QQ H^*(X;\QQ)=\dim_\QQ H^*(X^T;\QQ).\]

Now for general $n$ we proceed via induction on the number of prime factors. Writing $n=mp$ for some prime $p$ we observe that $T/T_n$ agrees with the quotient of $T/T_m$ by its maximal $p$-torus $U$ and $X^{T_n}= (X^{T_m})^U$. Then the initial considerations for prime numbers yield the claim.
\end{proof}

The computation algorithm outlined in the beginning of this sections is made precise in the following theorem, where we assume $\dim T\geq 1$. The equivariant cohomologies of the $T$-fixed points $H_{T/T_n}^*(X^T)\cong R_n\otimes H^*(X^T)$ are understood as subalgebras of the common ambient algebra $H_T(X^T)\cong R\otimes H^*(X^T)$, where the intersection is taken.

\begin{thm}\label{thm:MainThm}
Assume that $H^*_T(X)$ is a free $R$-module and that for all $n\in \mathbb{N}$ the ordinary cohomology $H^*(X^{T_n})$ is torsion free. For $n\in \mathbb{N}$ and a subtorus $K\subset T$ set \[A_n=\im(H^*_{T/T_n}(X^{T_{n}})\rightarrow H^*_{T/T_n}(X^T)),\quad B_{n,K}= \im(H^*_{T/T_n}(X^{T_n\cdot K})\rightarrow H^*_{T/T_n}(X^T)).\]
Furthermore let $S$ be the finite set of $n$ for which $X^{T_n}\neq X^T$.
Then $H^*_T(X)\cong A_1$ can be recursively computed from the $B_{n,K}$ (and thus from $X_1$)
via
\[A_{n}=\left(\bigcap_{p} R_n\otimes_{R_{np}} A_{np}\right)\cap \left(\bigcap_K B_{n,K}\right)\]
where $p$ ranges over all primes with $pn\in S$ and $K$ ranges over all codimension $1$ subtori of $T$.
\end{thm}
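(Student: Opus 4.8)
The plan is to run the integral Chang--Skjelbred theorem (Theorem~\ref{thm:CS}) once for each $n$, applied not to the ineffective $T$-action on $X^{T_n}$ but to the induced $T/T_n$-action, and then to track how the terms coming from the maximal $p$-tori of $T/T_n$ relate to the invariants $A_{np}$ one level deeper in the recursion. Fix $n$. Since $X$ is a finite $T$-CW complex, $X^{T_n}$ is a finite $T/T_n$-CW complex with $(X^{T_n})^{T/T_n}=X^T$, and Lemma~\ref{lem:free} together with Borel localization shows that $H^*_{T/T_n}(X^{T_n})$ is free over $R_n$ and restricts to $H^*_{T/T_n}(X^T)$ via an injection; hence its image $A_n$ is free over $R_n$, so Theorem~\ref{thm:CS} applies to the $T/T_n$-space $X^{T_n}$ and gives
\[
A_n=\bigcap_{U'}\im\bigl(H^*_{T/T_n}((X^{T_n})^{U'})\to H^*_{T/T_n}(X^T)\bigr),
\]
where $U'$ ranges over the codimension~$1$ subtori of $T/T_n$ together with the maximal $p$-tori of $T/T_n$ for all primes $p$.

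Next I would identify this index set. Let $\pi\colon T\to T/T_n$ be the projection; a point of $X^{T_n}$ is fixed by $U'$ precisely when it is fixed by $\pi^{-1}(U')$, so $(X^{T_n})^{U'}=X^{\pi^{-1}(U')}$. Passing to identity components of preimages shows that the codimension~$1$ subtori of $T/T_n$ are exactly the $KT_n/T_n$ with $K\subset T$ a codimension~$1$ subtorus, and $\pi^{-1}(KT_n/T_n)=T_n\cdot K$; these terms are the $B_{n,K}$. The maximal $p$-torus of $T/T_n$ is $T_{np}/T_n$, since $t\in T$ maps to a $p$-torsion element of $T/T_n$ iff $t\in T_{np}$, and $\pi^{-1}(T_{np}/T_n)=T_{np}$; the corresponding term is $\im(H^*_{T/T_n}(X^{T_{np}})\to H^*_{T/T_n}(X^T))$. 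Terms with $(X^{T_n})^{U'}=X^T$ equal the full ambient algebra $R_n\otimes H^*(X^T)$ and may be dropped, which restricts the contributing primes to those with $np\in S$ while harmlessly retaining all $K$.

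The crux is the identification
\[
\im\bigl(H^*_{T/T_n}(X^{T_{np}})\to H^*_{T/T_n}(X^T)\bigr)=R_n\otimes_{R_{np}}A_{np}
\]
of submodules of $R\otimes H^*(X^T)$. A flat base-change argument is unavailable since $R_n$ is not flat over $R_{np}$; instead one uses that the $T/T_n$-action on $Z:=X^{T_{np}}$ factors through $T/T_{np}$, which produces a map of Borel fibrations over $B(T/T_n)\to B(T/T_{np})$ restricting to the identity on the common fibre $Z$. By Lemma~\ref{lem:free}, $H^*_{T/T_{np}}(Z)$ is free over $R_{np}$, so (using torsion freeness of $H^*(Z)$ and Leray--Hirsch) the fibre restriction $H^*_{T/T_{np}}(Z)\to H^*(Z)$ is surjective; as it factors through $H^*_{T/T_n}(Z)\to H^*(Z)$, the latter is surjective as well, whence $H^*_{T/T_n}(Z)$ is free over $R_n$ and its fibre restriction is reduction modulo the augmentation ideal $R_n^{+}$. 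A graded Nakayama argument over $R_n$ then shows that $H^*_{T/T_n}(Z)$ is generated as an $R_n$-module by the image of $H^*_{T/T_{np}}(Z)$. Applying the $R_n$-linear restriction to $X^T$, commuting it past this generating set via the comparison square, and noting that the resulting natural surjection onto the $R_n$-span of $A_{np}$ is an isomorphism of free $R_n$-modules (both of rank $\dim_\QQ H^*(X^{T_{np}};\QQ)$, and a surjection of free modules of equal finite rank over a domain is an isomorphism), one obtains the displayed identity. Substituting this and the previous paragraph into the first display yields the recursion. I would finish by observing that $S$ is finite, that the recursion is well founded since each step passes from $n$ to a strictly larger $np\in S$ and terminates once no such prime exists (where $A_n=\bigcap_K B_{n,K}$ depends only on $X_1$), and that for $n=1$ one has $A_1=\im(H^*_T(X)\to H^*_T(X^T))\cong H^*_T(X)$.

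The main obstacle is the identification in the previous paragraph: showing that replacing the $T/T_{np}$-action on $X^{T_{np}}$ by the $T/T_n$-action merely extends coefficients rather than creating new cohomology. This is false without the hypothesis that $H^*(X^{T_{np}})$ is torsion free, and---because $R_n$ is not flat over $R_{np}$---it cannot be seen by a purely algebraic base change; the essential inputs are the comparison of the two Borel fibrations together with Leray--Hirsch, and a graded Nakayama lemma over the non-field coefficient ring $R_n$.
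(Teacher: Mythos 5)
Your proposal is correct and follows essentially the same route as the paper: apply the integral Chang--Skjelbred theorem to the $T/T_n$-action on $X^{T_n}$ (justified by Lemma \ref{lem:free}), identify the preimages of the subtori and maximal $p$-tori of $T/T_n$ as the $T_n\cdot K$ and $T_{np}$, and reduce the $p$-torus terms to $R_n\otimes_{R_{np}}A_{np}$ via comparison of the Borel fibrations for the $T/T_{np}$- and $T/T_n$-actions on $X^{T_{np}}$. The only cosmetic difference is in that last base-change step, where the paper deduces that the pulled-back Serre spectral sequence collapses and carries a basis to a basis, while you reach the same conclusion via surjectivity of the fibre restriction, Leray--Hirsch, and a Nakayama/rank-count argument.
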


\begin{proof}
The fact that $H^*_T(X)\cong A_1$ follows from the fact the restriction map $H^*_T(X)\rightarrow H^*_T(X^T)$ is injective by the localization theorem. For the description of higher $A_n$ we can apply Theorem \ref{thm:CS} to the $T/T_n$-action on $X^{T_n}$ whose requirements are fulfilled by Lemma \ref{lem:free}. The preimage of the maximal $p$-torus of $T/T_n$ under $T\rightarrow T/T_n$ is $T_{np}$. The preimages of a subtorus of $T/T_n$ is of the form $T_n\cdot K$ for some subtorus $K\subset T$. Thus we arrive at
\[A_n=\bigcap_U \im(H^*_{T/T_n}((X^U)\rightarrow H^*_{T/T_n}(X^T))\]
where $U$ ranges over $T_{np}$ for all primes and over all $T_n\cdot K$ where $K\subset T$ is a codimension $1$ subtorus. By the choice of $S$ we may disregard any $T_{np}$ with $np\notin S$. Consider the commutative diagram

\[\xymatrix{
H^*_{T/T_{np}}(X^{T_{np}})\ar[r]\ar[d] & H^*_{T/T_{np}}(X^T)\ar[d]\\
H^*_{T/T_{n}}(X^{T_{np}})\ar[r]& H^*_{T/T_{n}}(X^T)
}\]
induced by the projection $T/T_n\rightarrow T/T_{np}$ and the inclusion of the respective subspaces.
The $R_{np}$-module $H^*_{T/T_{np}}(X^{T^{np}})$ is free by Lemma \ref{lem:free} and the Serre spectral sequence of the corresponding Borel construction collapses. The pullback  of this fibration along $B(T/T_n)\rightarrow B(T/T_{np})$ is the Borel fibration of the $T/T_n$-action on $X^{T/T_{np}}$. Hence the corresponding spectral sequence collapses as well. Consequently $H^*_{T/T_n}(X^{T_{np}})$ is a free $R_n$-module and $H^*_{T/T_{np}}(X^{T_{np}})\rightarrow H^*_{T/T_{n}}(X^{T_{np}})$ maps an $R_{np}$-basis to an $R_n$-basis. It follows that \[R_n\otimes _{R_{np}} H^*_{T/T_{np}}(X^{T^{np}})\rightarrow H^*_{T/T_n}(X^{T_{np}})\] is an isomorphism. We deduce that $\im (H^*_{T/T_n}(X^{T_{np}})\rightarrow H^*_{T/T_n}(X^T))=R_n\otimes_{R_{np}} A_{np}$.
\end{proof}

Returning to the topic of investigating the necessary requirements needed for the inductive procedure above to work, we remark that the situation is particularly easy in the case of Hamiltonian actions. One has:

\begin{lem}\label{lem:symplecticTorsion}
Let $M$ be a compact Hamiltonian $T$-manifold, then the following are equivalent:
\begin{enumerate}[(i)]
\item The $R$-module $H^*_T(M)$ is free.
\item $H^*(M^U)$ is torsion free for some closed subgroup $U\subset T$.
\item $H^*(M^U)$ is torsion free for every closed subgroup $U\subset T$.
\end{enumerate}
\end{lem}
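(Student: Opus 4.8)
The plan is to show that all three conditions are equivalent to the single, manifestly $U$-independent assertion
\[(\star)\qquad H^*(M^T;\ZZ)\text{ is torsion free,}\]
by running through them in the cyclic order $(\star)\Rightarrow(\mathrm{iii})\Rightarrow(\mathrm{ii})\Rightarrow(\star)$ and separately $(\star)\Leftrightarrow(\mathrm i)$. The basic geometric input is that for any closed subgroup $U\subseteq T$ the fixed point set $M^U$ is a closed (possibly disconnected) symplectic submanifold on which $T$ acts through $T/U$; since the $\mathfrak u$-components of the moment map $\mu$ are locally constant on $M^U$, restricting and projecting $\mu$ realizes each component of $M^U$ as a compact Hamiltonian $T/U$-manifold, and $(M^U)^{T/U}=M^T$. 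Thus $(M,M^T)$, $(M^U,M^T)$ and $(M^T,M^T)$ are all instances of the pair ``a Hamiltonian torus manifold together with its fixed point set'', and I will use two standard facts about such a pair $(N,N^{T'})$: rational equivariant formality of Hamiltonian actions, which via Borel localization gives $\dim_\QQ H^*(N;\QQ)=\dim_\QQ H^*(N^{T'};\QQ)$; and that a generic component $\mu^\xi$ of the moment map is a Morse--Bott function with critical set $N^{T'}$ and complex, hence oriented, negative normal bundles, so that the Morse--Bott inequality yields $\dim_{\ZZ_p}H^*(N;\ZZ_p)\leq\dim_{\ZZ_p}H^*(N^{T'};\ZZ_p)$ for every prime $p$. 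Applying the first fact to $(M^U,M^T)$ and to $(M,M^T)$ already gives $\dim_\QQ H^*(M^U;\QQ)=\dim_\QQ H^*(M;\QQ)$ for all $U$.

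Granting $(\star)$, the implication $(\star)\Rightarrow(\mathrm{iii})$ follows: for any $U$ the Morse--Bott inequality for $M^U$ together with the universal coefficient theorem give $\dim_\QQ H^*(M^U;\QQ)=\dim_\QQ H^*(M^T;\QQ)=\dim_{\ZZ_p}H^*(M^T;\ZZ_p)\geq\dim_{\ZZ_p}H^*(M^U;\ZZ_p)\geq\dim_\QQ H^*(M^U;\QQ)$, forcing equalities and hence torsion freeness of $H^*(M^U;\ZZ)$; the step $(\mathrm{iii})\Rightarrow(\mathrm{ii})$ is trivial. For the bridge to (i): if $H^*_T(M;\ZZ)$ is free over $R$ then (as recalled in the introduction) $H^*_T(M;\ZZ)\to H^*(M;\ZZ)$ is surjective, hence $H^*(M;\ZZ)$ is a quotient of the free abelian group $H^*_T(M;\ZZ)/(x_1,\dots,x_r)$, whose rank equals $\operatorname{rk}_R H^*_T(M;\ZZ)=\dim_\QQ H^*(M;\QQ)$ (rational formality, which is automatic from integral freeness), i.e.\ equals the free rank of $H^*(M;\ZZ)$; so $H^*(M;\ZZ)$ is torsion free, and $(\star)$ then results from the downward step below applied to $(M,M^T)$. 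Conversely, if $(\star)$ holds then $H^*(M;\ZZ)$ is torsion free by the case $U=\{e\}$ of $(\star)\Rightarrow(\mathrm{iii})$, so the Serre spectral sequence of $M\to M_T\to BT$, which collapses rationally by equivariant formality, also collapses over $\ZZ$ (its $E_2$-page $R\otimes H^*(M;\ZZ)$ is torsion free and the differentials vanish rationally), and Leray--Hirsch yields freeness of $H^*_T(M;\ZZ)$ over $R$.

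It remains to prove $(\mathrm{ii})\Rightarrow(\star)$, the propagation of torsion freeness \emph{downward} from $M^U$ to $(M^U)^{T/U}=M^T$; this is the main obstacle, since Smith theory and the Morse--Bott inequality only bound a Hamiltonian manifold's Betti numbers from below by those of its fixed set. I would get around this by climbing the tower of $p$-power torsion subgroups. Let $N$ be a Hamiltonian $T'$-manifold with $H^*(N;\ZZ)$ torsion free and $p$ a prime. Smith theory for $T'_p\cong\ZZ_p^{\dim T'}$ gives $\dim_{\ZZ_p}H^*(N^{T'_p};\ZZ_p)\leq\dim_{\ZZ_p}H^*(N;\ZZ_p)$, while rational formality applied to $N$ and to the Hamiltonian $T'/T'_p$-manifold $N^{T'_p}$ (whose fixed set is $N^{T'}$) gives $\dim_\QQ H^*(N^{T'_p};\QQ)=\dim_\QQ H^*(N^{T'};\QQ)=\dim_\QQ H^*(N;\QQ)=\dim_{\ZZ_p}H^*(N;\ZZ_p)$, the last equality by torsion freeness; combined with $\dim_{\ZZ_p}H^*(N^{T'_p};\ZZ_p)\geq\dim_\QQ H^*(N^{T'_p};\QQ)$ this forces $\dim_{\ZZ_p}H^*(N^{T'_p};\ZZ_p)=\dim_\QQ H^*(N^{T'_p};\QQ)$, i.e.\ $H^*(N^{T'_p};\ZZ)$ is again torsion free. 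Iterating with $N\rightsquigarrow N^{T'_p}$ (so that the maximal $p$-torus of the new acting torus pulls back to $T'_{p^2}$, and so on) shows $H^*(N^{T'_{p^k}};\ZZ)$ is torsion free for all $k$; since $N$ has finitely many orbit types the decreasing chain $N^{T'_p}\supseteq N^{T'_{p^2}}\supseteq\cdots$ stabilizes, and its eventual value equals $N^{T'}$ because the $p$-power roots of unity are dense in the circle, so $\bigcup_k T'_{p^k}$ is dense in $T'$. Taking $N=M^U$, $T'=T/U$ delivers $(\star)$ and closes the argument; the delicate point throughout is exactly this downward propagation, the rest being an assembly of equivariant formality, Leray--Hirsch, Borel localization and Smith theory as already used in the paper.
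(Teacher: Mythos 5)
Your argument is correct in outline but takes a genuinely different, more self-contained route than the paper. The paper's proof is three lines: it cites Frankel's theorem, which says that for a compact Hamiltonian (or K\"ahler) $T$-manifold the rational equivariant cohomology is free over $H^*(BT;\QQ)$ \emph{and} $H^*(M;\ZZ)$ has torsion if and only if $H^*(M^T;\ZZ)$ has torsion; since every $M^U$ is again a compact Hamiltonian $T$-manifold with the same fixed point set $M^T$, conditions (ii) and (iii) are both equivalent to torsion freeness of $H^*(M^T)$, and (i) is handled by the observation from the proof of Lemma \ref{lem:free} that, given torsion freeness of $H^*(M)$, integral $R$-freeness is equivalent to rational freeness. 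You instead re-derive both halves of Frankel's torsion statement from scratch: the direction ``fixed set torsion free $\Rightarrow$ ambient manifold torsion free'' from the mod $p$ Morse--Bott inequalities for a generic moment map component (where the complex negative bundles are essential), and the converse by climbing the tower $N\supseteq N^{T'_p}\supseteq N^{T'_{p^2}}\supseteq\cdots$ with Smith theory. This costs more work but makes the proof independent of the integral perfection statement in \cite{Frankel}, using only the inequalities the paper already employs in Lemmas \ref{lem:bettinums} and \ref{lem:free}; you also correctly identify the downward propagation as the genuinely delicate point.

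One step is stated too strongly. In the downward induction you conclude from $\dim_{\ZZ_p}H^*(N^{T'_p};\ZZ_p)=\dim_\QQ H^*(N^{T'_p};\QQ)$ that ``$H^*(N^{T'_p};\ZZ)$ is again torsion free''; this equality only excludes $p$-torsion, and Example \ref{ex:weirdsphere} shows that outside the Hamiltonian world $X^{T_3}$ really can acquire $2$-torsion even when $H^*(X)$ is torsion free, so the promotion is not automatic. The fix is cosmetic: the induction along the $p$-tower only ever consumes absence of $p$-torsion at the previous stage (that is all that is needed for the equality $\dim_\QQ H^*=\dim_{\ZZ_p}H^*$ which feeds the next step), so for the fixed prime $p$ you obtain absence of $p$-torsion in $H^*(N^{T'})$ at the stable end of the tower, and full torsion freeness of $H^*(N^{T'})$ then follows by running the tower separately for every prime. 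With that rewording the proof is complete.
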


\begin{proof}
As argued in the proof of Lemma \ref{lem:free} torsion freeness of $H(M)$ implies that $H^*_T(M)$ is $R$-free if and only if $H^*_T(M;\QQ)$ is free over $H^*(BT;\QQ)$. By \cite{Frankel} in the Hamiltonian case $H^*_T(M;\QQ)$ is free over $H^*(BT;\QQ)$ and $H^*(M)$ has torsion if and only $H^*(M^T)$ has torsion (see the Theorem in Section 4 and Corollary 1 therein). While being formulated for Kähler manifolds in the reference, the result applies equally to the Hamiltonian setting. Now the equivalences in the Lemma follow from the fact, that the $M^U$ are all compact Hamiltonian $T$-manifolds sharing the same fixed point set.
\end{proof}
Combining this with Theorem \ref{thm:MainThm} yields Corollary \ref{cor:mainHamiltonian} as stated in the introduction.

\section{Computational aspects: the GKM case}\label{sec:GKM}

One class of spaces where Chang-Skjelbred computations are most prominently used is that of GKM manifolds.

\begin{defn}
A GKM manifold (resp.\ integer GKM manifold) is a compact orientable manifold $M$ with $H^*(M;\QQ)$ (resp. $H^*(M;\ZZ)$) concentrated in even degrees together with an action of a torus $T$ such that $M^T$ is finite and $M_1$ is a finite union of $2$-spheres.
\end{defn}

To a GKM manifold one can associate its GKM graph $(\Gamma,\alpha)$ where the vertices of $\Gamma$ are $V(\Gamma)=M^T$, the edges $E(\Gamma)$ are the invariant $2$-spheres (each containing $2$ fixed points) and $\alpha\colon E(\Gamma)\rightarrow H^2(BT;\ZZ)/\pm$ associates to each sphere the weight of the isotropy action at the fixed points (unique up to sign). For notational purposes we fix for each $e\in E(\Gamma)$ a starting vertex $i(e)$ and end vertex $t(e)$. The appeal of this class of spaces is that $M_1$ is completely encoded as a $T$-space in the combinatoric object $(\Gamma,\alpha)$. The Chang-Skjelbred method enables the computation of equivariant cohomolgy in terms of the graph under the right hypothesis and coefficients. Concretely, for a coefficient ring $A$ one defines the graph cohomology
\[H^*_T(\Gamma;A)=\{(f_p)_{p\in E(\Gamma)}\in H^*(BT;A)^{E(\Gamma)}\mid f_{i(e)}\equiv f_{t(e)}\mod \alpha(e)\text{ for all $e\in E(\Gamma)$}\}\]
where $\alpha(e)$ is interpreted in $H^2(BT;A)/\pm$. The Chang-Skjelbred Lemma, \cite[Lemma 2.3]{ChangSkjelbred}, \cite[Corollary 2.2]{FP:ESFEFS} then translates into the GKM theorem (cf. \cite{GKM})

\begin{thm}\label{thm:GKM} If $M$ is a GKM manifold and $A=\QQ$ or $M$ is an integer GKM manifold, $A=\ZZ$, and additionally all isotropy groups of points outside $M_1$ are contained in a proper subtorus, then
\[H^*_T(M;A)\cong H^*_T(\Gamma;A).\]
\end{thm}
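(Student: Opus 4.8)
The plan is to reduce the statement to exactness of the Chang--Skjelbred sequence (\ref{eq:CS}) and then to a local computation on each invariant $2$-sphere. First I would establish the hypotheses needed for exactness. Since $H^*(M;\QQ)$ (in the GKM case) resp.\ $H^*(M;\ZZ)$ (in the integer GKM case) is concentrated in even degrees, the $E_2$-term $H^*(BT;A)\otimes_A H^*(M;A)$ of the Serre spectral sequence of the Borel fibration $M\to M_T\to BT$ is concentrated in even total degree; hence every differential vanishes, the sequence collapses, and $H^*_T(M;A)$ is a free $H^*(BT;A)$-module. For $A=\QQ$ this is equivariant formality and (\ref{eq:CS}) is exact by \cite{ChangSkjelbred}; for $A=\ZZ$ the $R$-freeness together with the assumption that the isotropy group of every point outside $M_1$ lies in a proper subtorus makes (\ref{eq:CS}) exact by \cite[Corollary 2.2]{FP:ESFEFS}. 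In both cases we obtain
\[H^*_T(M;A)\cong\ker\!\big(H^*_T(M^T;A)\to H^*_T(M_1,M^T;A)\big).\]

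Next I would identify the two terms on the right. Since $M^T$ is finite, $H^*_T(M^T;A)=\bigoplus_{v\in V(\Gamma)}H^*(BT;A)$, which is exactly the module underlying the definition of $H^*_T(\Gamma;A)$. For the relative term, the invariant $2$-spheres $S_e$, $e\in E(\Gamma)$, meet $M^T$ in their respective pairs of fixed points $\{i(e),t(e)\}$ and meet one another only inside $M^T$; a standard excision argument (compare Remark \ref{rem:nonexact}) therefore yields $H^*_T(M_1,M^T;A)\cong\bigoplus_{e\in E(\Gamma)}H^*_T(S_e,S_e^T;A)$, and by naturality the restriction of the map $H^*_T(M^T;A)\to H^*_T(M_1,M^T;A)$ to the summand indexed by $e$ is the composite of the restriction $H^*_T(M^T;A)\to H^*_T(S_e^T;A)$ with the connecting map of the pair $(S_e,S_e^T)$.

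It then remains to compute a single edge. The $T$-action on $S_e\cong S^2$ is linear with tangent weight $\pm\alpha(e)\in H^2(BT;A)$ at the two poles, so $(S_e)_T\to BT$ is the projectivization of $L_{\alpha(e)}\oplus\underline{\CC}$, and the projective bundle formula gives $H^*_T(S_e;A)\cong H^*(BT;A)[u]/\big(u(u-\alpha(e))\big)$, which restricts to $H^*_T(S_e^T;A)=H^*(BT;A)^2$ via $u\mapsto(0,\alpha(e))$. Via the long exact sequence of the pair, a pair $(f_{i(e)},f_{t(e)})$ lies in $\ker\!\big(H^*_T(S_e^T;A)\to H^*_T(S_e,S_e^T;A)\big)=\im\!\big(H^*_T(S_e;A)\to H^*_T(S_e^T;A)\big)$ precisely when $f_{i(e)}\equiv f_{t(e)}\bmod\alpha(e)$, the sign ambiguity of $\alpha(e)$ being immaterial for this ideal. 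Imposing these conditions simultaneously over all edges identifies the kernel above with $H^*_T(\Gamma;A)$, which completes the proof.

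The genuinely delicate point is the exactness of (\ref{eq:CS}) in the integral case: one must rule out contributions of the maximal $p$-tori appearing in Theorem \ref{thm:CS}, and this is exactly what the hypothesis on isotropy outside $M_1$ buys us through \cite[Corollary 2.2]{FP:ESFEFS}. By contrast, the local $2$-sphere computation is insensitive to whether the generic isotropy of $S_e$ is disconnected, since $\alpha(e)$ is defined as the tangent weight and the formula for $H^*_T(S_e;A)$ holds regardless; the remaining ingredients (the spectral sequence collapse, the excision decomposition, and the long exact sequences) are routine.
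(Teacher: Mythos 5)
Your proof is correct and takes essentially the route the paper intends, since the paper offers no independent argument but simply declares the theorem to be the ``translation'' of \cite[Lemma 2.3]{ChangSkjelbred} resp.\ \cite[Corollary 2.2]{FP:ESFEFS} into graph terms: deduce $R$-freeness from even-degree cohomology, invoke those results for exactness of (\ref{eq:CS}), and perform the standard local computation on each invariant $2$-sphere. The only point you gloss over is that for $A=\ZZ$ the collapse of the Serre spectral sequence gives $H^*(BT;\ZZ)$-freeness only because $H^*(M;\ZZ)$ is torsion free, which in turn follows from compactness, orientability and Poincar\'e duality once $H^{\mathrm{odd}}(M;\ZZ)=0$.
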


\begin{rem}
One can show, see \cite[Proposition 3.10]{IsabelleLiat}, \cite[Proposition 2.8]{GKZZ2}, that the condition on the isotropy groups in the above integral case is equivalent to the condition that any two weights of adjacent edges are coprime in $H^2(BT;\ZZ)$ (withsome sign choice).
\end{rem}

Our Theorem \ref{thm:MainThm} translates into a computation algorithm that works without the assumption on the isotropy groups while adding an additional torsion freeness assumption on the cohomologies of certain submanifolds. For a GKM graph $(\Gamma,\alpha)$ and $n\in \NN$ we consider the labelled graph $(\Gamma_n,\alpha|_{E(\Gamma_n)})$ with $V(\Gamma_n)=V(\Gamma)$, $E(\Gamma_n)=\{e\in E(\Gamma)\mid \alpha(e)\equiv 0\mod n\}$. Note that the weights of $\Gamma_n$ lie in the image of $R_n\rightarrow R$, which we identify as a subring. We set
\[H^*_{T/T_n}(\Gamma_n)=\{(f_p)_{p\in V(\Gamma)}\in R_n^{V(\Gamma)}\mid f_{i(e)}\equiv f_{t(e)}\mod \alpha(e)\text{ for all $e\in E(\Gamma_n)$}\}\]
where the congruence condition is to be understood in $R_n^{V(\Gamma)}$.

\begin{defn}
For finite $\Gamma$ we recursively define
\[\widehat{H}^*_{T/T_n}(\Gamma_n)= {H^*}_{T/T_n}(\Gamma_n)\cap\bigcap_{p} R_n\otimes_{R_{np}}\widehat{H}^*_{T/T_{np}}(\Gamma_{np})\]
where $p$ ranges over all primes such that $np$ divides the weights of two distinct adjacent edges in $\Gamma$. For $n=1$ we also denote this by $\widehat{H}^*_T(\Gamma)$.
\end{defn}

Theorem \ref{thm:MainThm} now translates into the following extension of the GKM theorem.

\begin{thm}\label{thm:MainThmGKM}
Let $M$ be an integer GKM manifold and suppose that $H^*(M^{T_n})$ is torsion free for all $n\in \mathbb{N}$. Then 
\[H^*_T(M)\cong \widehat{H}^*_T(\Gamma).\]
\end{thm}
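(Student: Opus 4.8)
The plan is to deduce Theorem \ref{thm:MainThmGKM} from Theorem \ref{thm:MainThm} by matching up, for every $n$, the topological intersection data computed there with the combinatorial graph-cohomology groups $H^*_{T/T_n}(\Gamma_n)$ and their recursive modifications $\widehat{H}^*_{T/T_n}(\Gamma_n)$. First I would check that the hypotheses of Theorem \ref{thm:MainThm} are met: $M$ being an integer GKM manifold means $H^*(M)$ is torsion free and concentrated in even degrees, so $H^*_T(M)$ is free over $R$ (the Serre spectral sequence of the Borel fibration collapses for degree reasons, and torsion-freeness of $H^*(M)$ upgrades this to $R$-freeness as in the proof of Lemma \ref{lem:free}); the extra hypothesis that $H^*(M^{T_n})$ be torsion free for all $n$ is assumed outright, and the Hamiltonian case follows from Lemma \ref{lem:symplecticTorsion}.

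The core of the argument is the identification, for each $n$, of the space $X^{T_n}$ (with its residual $T/T_n$-action) as a GKM-type space whose one-skeleton is exactly the subgraph $\Gamma_n$. Concretely: a $T$-fixed point lies in $M^{T_n}$ always, so $V(\Gamma_n)=V(\Gamma)=M^T$; and an invariant $2$-sphere $S_e$ with weight $\alpha(e)$ is fixed pointwise by $T_n$ precisely when $n\mid\alpha(e)$ — otherwise $T_n$ acts on $S_e$ with the same isolated fixed points as $T$. Hence $(M^{T_n})_1 = M^{T_n}\cap M_1$ is the union of the spheres indexed by $E(\Gamma_n)$, i.e.\ the graph $\Gamma_n$, and the $T/T_n$-weight of such a sphere is the image of $\alpha(e)$ under $R_n\hookrightarrow R$. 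Moreover $(M^{T_n})^{T/T_n} = M^T$ is finite, and by Lemma \ref{lem:bettinums} the total Betti number of $M^{T_n}$ equals that of $M$, so $M^{T_n}$ has cohomology concentrated in even degrees as well; together with torsion-freeness this makes $M^{T_n}$ an integer GKM manifold for the $T/T_n$-action. Applying the Chang–Skjelbred identity (Theorem \ref{thm:CS}) to this action, the term $B_{n,K}=\im(H^*_{T/T_n}(X^{T_n\cdot K})\to H^*_{T/T_n}(X^T))$ coming from codimension-$1$ subtori $K$ assembles — via the excision/Remark \ref{rem:nonexact} decomposition of $H^*((M^{T_n})_1, M^T)$ as a sum over edges — precisely into the congruence conditions defining $H^*_{T/T_n}(\Gamma_n)$; this is the usual GKM-theorem computation run for $T/T_n$ acting on $M^{T_n}$, and it gives $\bigcap_K B_{n,K} = H^*_{T/T_n}(\Gamma_n)$ inside $R_n\otimes H^*(M^T) = R_n^{V(\Gamma)}$.

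With that dictionary in place, the recursion of Theorem \ref{thm:MainThm},
\[
A_n = \Bigl(\bigcap_{p} R_n\otimes_{R_{np}} A_{np}\Bigr)\cap\Bigl(\bigcap_K B_{n,K}\Bigr),
\]
becomes, under $A_n\leftrightarrow\widehat{H}^*_{T/T_n}(\Gamma_n)$ and $\bigcap_K B_{n,K}\leftrightarrow H^*_{T/T_n}(\Gamma_n)$, exactly the defining recursion of $\widehat{H}^*_{T/T_n}(\Gamma_n)$, \emph{provided} the index set of primes $p$ matches. In Theorem \ref{thm:MainThm} the relevant $p$ are those with $np\in S$, i.e.\ those for which $M^{T_{np}}\neq M^T$; I would argue that $M^{T_{np}}\supsetneq M^T$ holds iff some invariant $2$-sphere is fixed by $T_{np}$, i.e.\ iff $np\mid\alpha(e)$ for some edge $e$ — and since $M^{T_{np}}$ being strictly larger than $M^T$ forces at least a $2$-dimensional orbit-type stratum, hence an invariant sphere (the GKM one-skeleton already sees all codimension phenomena), one checks $S = \{n : np \mid \alpha(e)\text{ for some }e, \text{ at least when }np\text{ is relevant}\}$; the definition of $\widehat{H}^*$ instead ranges over primes $p$ such that $np$ divides the weights of \emph{two distinct adjacent edges}, which needs a small lemma showing these two conditions pick out the same primes (an edge $e$ with $np\mid\alpha(e)$ has two endpoints, and at a GKM vertex of valence $\geq\dim T\geq 1$ adjacency is automatic; conversely if only isolated edges carried the divisibility the corresponding $M^{T_{np}}$ would be a disjoint union of $2$-spheres which is fine). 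I expect this bookkeeping — precisely reconciling the set $S$, the "primes $p$ with $np\in S$", and the "primes $p$ such that $np$ divides two adjacent weights" — to be the main obstacle; once it is settled, an induction on the number of prime factors of $n$ (downward from the largest $n\in S$, where the recursion terminates and $A_n = H^*_{T/T_n}(\Gamma_n) = \widehat{H}^*_{T/T_n}(\Gamma_n)$) transports Theorem \ref{thm:MainThm}'s conclusion $H^*_T(M)\cong A_1$ to $H^*_T(M)\cong\widehat{H}^*_T(\Gamma)$.
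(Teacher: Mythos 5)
Your overall route is the same as the paper's: verify the hypotheses of Theorem \ref{thm:MainThm}, identify $M^{T_n}$ with its residual $T/T_n$-action as an integer GKM manifold with graph $\Gamma_n$, show $\bigcap_K B_{n,K}=H^*_{T/T_n}(\Gamma_n)$, and then match the recursion of Theorem \ref{thm:MainThm} with the defining recursion of $\widehat{H}^*$. However, at the step you yourself flag as the main obstacle there is a genuine gap, and the fix you sketch goes in the wrong direction. The two index sets of primes do \emph{not} coincide: $np\in S$ (i.e.\ $M^{T_{np}}\neq M^T$) happens as soon as $np$ divides the weight of \emph{one} edge, whereas the definition of $\widehat{H}^*$ only ranges over primes for which $np$ divides the weights of \emph{two distinct adjacent} edges. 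Example \ref{ex:notsimplify} is exactly a case where these differ: $pq$ divides the weight $pqx_3$ of a single edge, so $pq\in S$, yet no two adjacent weights share the factor $pq$, and the paper explicitly discards $q$ and $pq$ from the recursion. A ``small lemma showing these two conditions pick out the same primes'' therefore cannot exist; your parenthetical about an edge having two endpoints conflates ``one edge divisible by $np$'' with ``two adjacent edges divisible by $np$''.

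The argument actually needed — and the one the paper supplies — is that the extra primes are \emph{redundant} in the intersection rather than absent from $S$. If $np\in S$ but $np$ does not divide two adjacent weights, then $M^{T_{np}}$ is a disjoint union of $2$-spheres and isolated points, and one computes $A_{np}$ explicitly: it is generated over $R_{np}$ by the indicator of each isolated vertex and, for each isolated edge with weight $\alpha$, by the elements $(1,1)$ and $(0,\alpha)$ supported on its two vertices. Since every $(f_v)\in H^*_{T/T_n}(\Gamma_n)$ satisfies $f_{i(e)}\equiv f_{t(e)}\bmod\alpha(e)$ for the (fewer) edges of $\Gamma_{np}$, it is an $R_n$-combination of these generators, whence $R_n\otimes_{R_{np}}\widehat{H}^*_{T/T_{np}}(\Gamma_{np})\supset H^*_{T/T_n}(\Gamma_n)$ and the corresponding factor drops out of the intersection. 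Your phrase ``the corresponding $M^{T_{np}}$ would be a disjoint union of $2$-spheres which is fine'' points at this situation but does not carry out the containment, which is the substantive content of the step; without it the recursion you write down intersects over too few primes and the claimed isomorphism is not established. Note also that the inductive hypothesis $A_{np}=\widehat{H}^*_{T/T_{np}}(\Gamma_{np})$ is still needed for these discarded primes in order to identify the term being thrown away.
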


\begin{proof}
For each $n\in \mathbb{N}$ the space $M^{T_n}$ is again a disjoint union of integer GKM $T$-manifolds by Lemma \ref{lem:free}. The GKM graph is given by $(\Gamma_n,\alpha|_{E(\Gamma_n)})$. With the notation of Theorem \ref{thm:MainThm}, we show inductively that $A_n=\widehat{H}^*_{T/T_n}(\Gamma_n)$. If no adjacent weights are divisible by $np$ this is Theorem \ref{thm:GKM} applied to the $T/T_n$-action. Assume inductively that we have shown $A_{np}=\widehat{H}^*_{T/T_{np}}(\Gamma_{np})$ for all primes $p$. For a codimension $1$ subtorus $K\subset T$ the space $X^{T_nK}$ consists of discrete fixed points and copies of $S^2$ corresponding to those edges in $\Gamma_n$ whose weights vanish on $K$. Hence
\[\bigcap_K B_{n,K}=H^*_{T/T_n}(\Gamma_n)\] and Theorem \ref{thm:MainThm} gives the description
\[A_n=H^*_{T/T_n}(\Gamma_n)\cap\bigcap_p R_n\otimes_{R_{np}}\widehat{H}^*_{T/T_{np}}(\Gamma_{np})\]
with $p$ ranging over all primes. It remains to argue why it suffices to consider the intersection over those primes such that $np$ divides two adjacent edges. If $np$ does not do so, then $M^{T_{np}}$ is a disjoint union of copies of $S^2$ and discrete points. By the induction hypothesis $\widehat{H}^*_{T/T_{np}}(\Gamma_{np})=A_{np}$. The latter one computes to be generated by the following elements of $R_{np}^{V(\Gamma)}$: for every isolated vertex the element which is $1$ at that vertex and $0$ everywhere else and for every (isolated) edge the elements which are supported on the vertices belonging to the edge and take the values $(1,1)$ and $(0,\alpha)$, where $\alpha\in R_{np}^2$ is the weight of the edge.
In particular we observe that
\[R_n\otimes_{R_{np}}\widehat{H}^*_{T/T_{np}}(\Gamma_{np})\supset H^*_{T/T_n}(\Gamma_n)\]
does not contribute to the intersection. in the definition of $H^*_{T/T_n}(\Gamma_n)$.
\end{proof}

\begin{ex}\label{ex:notsimplify} We illustrate the method on an easy example.
Let $T=T^3$, $R=\ZZ[x_1,x_2,x_3]$, and $R_n=\ZZ[nx_1,nx_2,nx_3]$. Consider the $T$-action on $S^6$ which arises from the standard $T$-action by pulling back along the homomorphism $T\rightarrow T$, $(s,t,u)\mapsto (s^{p^2},t^{p^2},u^{pq})$ for distinct primes $p,q$. The GKM graph is

\begin{center}
		\begin{tikzpicture}

			\node (a) at (0,0)[circle,fill,inner sep=2pt] {};
			\node (b) at (6,0)[circle,fill,inner sep=2pt]{};
			\node at (3,.3) {$p^2x_2$};
			\node at (3,1.45) {$p^2x_1$};
	
			\node at (3,-0.9) {$pqx_3$};

			\draw [very thick](a) to[in=140, out=40] (b);
			\draw [very thick](a) to[in=180, out=0] (b);
			\draw [very thick](a) to[in=220, out=-40] (b);

		\end{tikzpicture}
	\end{center}
The example is not effective but can be easily turned into an effective one by adding more edges.
We denote $1=(1,1)\in R^2$ and $\alpha=(0,1)\in R^2$. In this case one easily sees via a pullback argument, that $H^*_T(S^6)=\langle 1, p^5qx_{123}\cdot\alpha\rangle_R$, where multiple indices denote the product of the respective $x_i$. We verify this using the method from Theorem \ref{thm:MainThmGKM}. The relevant graph cohomologies occurring in the recursion correspond to the common divisors of at least two adjacent edges. In particular $q,pq$ do not have to be considered. We compute
	\vspace*{0.3cm}
	\begin{center}

	\begin{tabular}{c|c|c}
	
	$H^*_T(\Gamma)$ & $H^*_{T/T_p}(\Gamma_p)$ & $H^*_{T/T_{p^2}}(\Gamma_{p^2})$ \\
	\hline
	$\langle 1,pqx_{123}\cdot\alpha\rangle_{R}$& $\langle 1,p^3qx_{123}\cdot\alpha\rangle_{R_p}$ & $\langle 1,p^4x_{12}\cdot\alpha\rangle_{R_{p^2}}$
	
	\end{tabular}
	\end{center}
	\vspace*{0.3cm}
	The rightmost entry already gives the respective value of $\widehat{H}^*$. Now we need to compute the intersection
	
	\[\widehat{H}^*_{T/T_p}(\Gamma_p)= H^*_{T/T_p}(\Gamma_p)\cap R_p\otimes_{R_{p^2}} H^*_{T/T_{p^2}}(\Gamma_{p^2}).\]
	The minimal coefficients in front of $x_{123}\alpha$ that one can produce in the respective factors of the intersection are (in the above order) given by $p^3q$ and $p^5$ coming from $(px_3)\cdot (p^4x_{12}\alpha)$. We stress the importance of the fact, that the factors are only $R_p$-modules, which forces the additional $p$-factor to be added to the second coefficient when compared to the generator in the table. Hence \[\widehat{H}^*_{T/T_p}(\Gamma_p)=\langle 1,p^5q x_{123}\cdot\alpha\rangle_{R_p}.\] These turn out to already be the $R$-generators for $\widehat{H}^*_T(\Gamma)$ since $H^*_T(\Gamma)\subset R\otimes_{R_p} \widehat{H}^*_{T/T_p}(\Gamma_p)$. We arrive at the desired result.
	
	This example illustrates that the recursive nature of the definition of $\widehat{H}^*_T$ is important: After computing the graph cohomologies in the table one cannot immediately extend all coefficients to $R$ and consider the intersections in $R^{V(\Gamma)}$ as this would yield the generators $1$, $p^4q x_{123}\cdot\alpha$. Instead, intersections and extensions of coefficients need to be performed in alternating fashion.
\end{ex}

\section{Counterexamples}
The goal of this section is to build an example of a $T$-action with $R$-free equivariant cohomology for which nevertheless the result of Theorem \ref{thm:MainThm} does not hold due to torsion appearing in some $H^*(X^{T_n})$. This shows that the latter condition is indeed necessary. We have divided the construction into several individual examples building upon another.

\begin{ex}[$T$-space $X$ with $R$-free $H^*_{T}(X)$ but $H^*(X^{T})$ has torsion]\label{ex:weirdsphere}

Set $T=S^1$. Consider the standard $S^1$-action on $\mathbb{R}P^2$ by viewing the latter as a disc with opposite boundary points identified and the action coming from the standard rotation. Now pull back this action along the homomorphism $s\mapsto s^3$ resulting in a $S^1$-space $A$ with principal isotropy $\mathbb{Z}_3$ and isotropy $\mathbb{Z}_6$ over the boundary of the disc. Now consider the $S^1$-space $B$ which arises from the standard rotation on $D^2$ by first dividing the boundary $\partial D^2$ by the standard $\mathbb{Z}_3$-action and then pulling back the induced $S^1$-action along the homomorphism $s\mapsto s^2$. The principal isotropy on $B$ is $\mathbb{Z}_2$ while the part coming from $\partial D^2$ is equivariantly homeomorphic to $S^1/\mathbb{Z}_6$. Now we form the space $X$ by gluing $A$ and $B$ along their respective copies of $S^1/\mathbb{Z}_6$, which we denote by $C$. We consider the Mayer-Vietoris sequence obtained by covering $X$ with $X\backslash C$ (which deformation retracts onto the fixed point set) and a small tube around $C$. The sequence becomes

\[0\rightarrow H_T(X)\rightarrow \mathbb{Z}[x]\oplus \mathbb{Z}[x]\oplus \mathbb{Z}[x]/(6x)\rightarrow \mathbb{Z}[x]/(3x)\oplus \mathbb{Z}[x]/(2x)\rightarrow 0\]
where the right hand map is given by $(a,b,c)\mapsto (a-c,b-c)$. Hence $H_{S^1}(X)$ is free. In fact $H^*_{S^1}(X)\cong H^*_{S^1}(S^2)$ for the standard rotation on $S^2$ and this isomorphism also holds for non-equivariant cohomology. However $X^{\mathbb{Z}_3}=A$ and $X^{\mathbb{Z}_2}=B$ have $\mathbb{Z}_2$ and $\mathbb{Z}_3$ torsion in their respective cohomology rings. We point out that despite this, the images in the fixed point cohomology are free.
\end{ex}

The next example is inspired by \cite[Example 5.2]{FP:ECSWICFTA} which carries a slightly different action and does not have $R$-free equivariant cohomology but has non-free image in the fixed point cohomology for the same reason.

\begin{ex}[$Y$ with $R$-free $H^*_{T}(Y)$ but $\im(H^*_{T/T_n}(Y^{T_n})\rightarrow H^*_{T/T_n}(Y^{T}))$ is not $R_n$-free]\label{ex:noice}
Set $T=S^1$.
Consider the space $X$ from Example \ref{ex:weirdsphere} and let $CX$ denote the cone over $X$. We form the space \[Y= (S^2\vee S^2)\cup (CX\times S^1)\] where the base layer $X\times S^1\subset CX\times S^1$ gets glued to $S^2\vee S^2$ along the map
$X\times S^1\rightarrow X\rightarrow X/C\cong S^2\vee S^2$. The $T$-action on $Y$ is defined by being the diagonal action on $CX\times S^1$, where the action on $CX$ is the one preserving the cone coordinate induced from the action on $X$ and the action on the right hand $S^1$ factor rotates with speed $3$. The gluing map then becomes equivariant with respect to the action on $S^2\vee S^2$ which rotates the first sphere at speed $3$ (the one which gets glued to $A$ in the terminology of Example \ref{ex:weirdsphere}) and the second one at speed $2$ (getting glued to $B$). A Mayer-Vietoris argument shows that
\[H^*(Y)^k\cong\begin{cases}
\mathbb{Z}& k=0,2,4\\
0&\text{else}
\end{cases}\]
The fixed point set consists of $3$ discrete points. Hence $H_T^*(Y)$ is $R$-free.
We wish to determine various images in the fixed point cohomology.

\begin{itemize}
\item $\im(H^*_T(Y)\rightarrow H^*_T(Y^T))$: \quad First note that the map $H^*(S^2\vee S^2)\rightarrow H^*(X)$ is surjective with kernel in $H^2(S^2\vee S^2)$ generated by the element $(2,3)$ w.r.t.\ the standard basis. This can be seen using cellular homology. One has $(X\times S^1)_T\cong X_{\mathbb{Z}_3}$ and a commutative diagram
\[\xymatrix{H^*_T(S^2\vee S^2)\ar[d]\ar[r] & H^*_T(X\times S^1)\ar[d]\\
H^*_{\ZZ_3}(S^2\vee S^2)\ar[r] & H^*_{\ZZ_3}(X)
}\]
in which the right hand vertical map is an isomorphism while the left hand vertical map is surjective. Now $H^*_{\ZZ_3}(X)=H^*(B\ZZ_3)\otimes H^*(X)$, $H^*_{\ZZ_3}(S^2\vee S^2)=H^*(B\ZZ_3)\otimes H^*(S^2\vee S^2)$ and the bottom horizontal map can be identified with the $H^*(B\ZZ_3)$-linear extension of the map on nonequivariant cohomology. In particular the upper horizontal map is surjective.

We obtain a Mayer-Vietoris sequence

\[0\rightarrow H^*_T(Y)\rightarrow H^*_T(S^2\vee S^2)\oplus H^*_T(CX\times S^1)\rightarrow H^*_T(X\times S^1)\rightarrow 0\]
in which the right hand map can be described as follows:

\[(\ZZ[x] \alpha\oplus \ZZ[x]\beta\oplus \ZZ[x] 1_{S^2\vee S^2})\oplus \ZZ[x]/(3x)  1_{S^1}\rightarrow \ZZ[x]/(3x) 1_X\oplus \ZZ[x]/(3x) \gamma\]
$\alpha\mapsto 3\gamma$, $\beta\mapsto -2\gamma$,
where $\alpha,\beta$ restrict to the standard generators of $H^2(S^2\vee S^2)$ and $\gamma$ restricts to a generator of $H^2(X)$. We conclude that the map $H^*_T(Y)\rightarrow H^*_T(S^2\vee S^2)$ has image the free $\ZZ[x]$-module generated by $1$, $2\alpha+3\beta$ and $3x(\alpha+\beta)$. We write elements in $H^*_T(Y^T)$ as triples in $H^*(BT)^3$ with the first two components corresponding to the fixed points in the first summand of $S^2\vee S^2$ (with principal isotropy $\ZZ_3$), the middle component corresponding to the shared fixed point, and the last component belonging to the fixed point which is exclusive to the second $S^2$ (with principal isotropy $\ZZ_2$). For the right choices of $\alpha,\beta$ we have $\alpha\mapsto (3x,0,0)$, $\beta\mapsto (0,0,2x)$. We find that $\im(H^*_T(Y)\rightarrow H^*_T(Y^T))$ is generated by $(1,1,1),(6x,0,6x),(9x^2,0,6x^2)$.

\item $\im(H^*_{T/T_3}(Y^{T_3})\rightarrow H^*_{T/T_3}(Y^T))$:\quad The space $Y^{T^3}$ is obtained by gluing $C\RR P^2\times S^1$ to $S^2\sqcup D^2$ via the map $\RR P^2\times S^1\rightarrow \RR P^2/\RR P^1=S^2$ on the bottom of the cone, as well as the inclusion $S^1\rightarrow D^2$ on the tip of the cone. We obtain a Mayer-Vietoris sequence
\[0\rightarrow H^*_{T/T_3}(Y^{T_3})\rightarrow H^*_{T/T_3}(S^2)\oplus H^*_{T/T_3}(D^2)\rightarrow H^*_{T/T_3}(\RR P^2\times S^1)\rightarrow 0.\]
Since the $T/T_3$-action on $\RR P^2\times S^1$ is free, one has $H^*_{T/T_3}(\RR P^2\times S^1)\cong H^*(\RR P^2)$. The right hand map in the above sequence is equivalent to
\[(\ZZ[3x]\cdot 1_{S^2}\oplus \ZZ[3x]\cdot \alpha)\oplus \ZZ[3x]\cdot 1_{D^2})\rightarrow \ZZ\cdot 1_{\RR P^2}\oplus \mathbb{Z}_2\cdot \delta\]
with $\alpha\mapsto \delta$, where $\alpha$ restricts to a generator of $H^2(S^2)$ and $\delta$ restricts to a generator in $H^2(\RR P^2)$. Hence the image of \[H^*_{T/T^3}(Y^{T_3})\rightarrow H^*_{T/T_3}(S^2)\oplus H^*_{T/T_3}(D^2)=H^*_{T/T_3}(S^2)\oplus H^*(B(T/T_3))\] is generated as an $\ZZ[3x]$-module by $(1,1),(0,3x),(2\alpha,0),(3x\alpha,0)$. In the fixed point cohomology $H^*_T(Y^T)$ this corresponds to the elements $(1,1,1),(0,0,3x),(6x,0,0),(9x^2,0,0)$.
\end{itemize}

\end{ex}

\begin{ex}[ $T$-space $Z$ with $R$-free $H^*_T(Z)$ where Theorem \ref{thm:MainThm} fails]\label{ex:counter}
We consider $Z=Y\times S^2$ where $Y$ is the $S^1$-space defined in Example \ref{ex:noice}. On $Z$, we consider the action of $T=T^2$ where the first factor acts on $Y$ and the second factor acts on the $S^2$ via effective rotation. We set $R=H^*(BT)=\ZZ[x,y]$ where $x$ is the generator induced by the projection onto the first circle factor and $y$ corresponds to the second. We observe that $Z_T=Y_{S^1}\times S^2_{S^1}$. Since both factors have torsion free cohomology, by the Künneth Theorem and by the considerations in Example \ref{ex:noice}, the image $H^*_T(Z)\rightarrow H^*_T(Z^T)\cong R^{2\times 3}$ is generated as an $R$-algebra by
\begin{align*}
\begin{pmatrix}
1&1&1\\1&1&1
\end{pmatrix},
\begin{pmatrix}
6x&0&6x\\6x&0&6x
\end{pmatrix},
\begin{pmatrix}
9x^2&0&6x^2\\9x^2&0&6x^2
\end{pmatrix},
\begin{pmatrix}
y&y&y\\0&0&0
\end{pmatrix},
\begin{pmatrix}
6xy&0&6xy\\0&0&0
\end{pmatrix},
\begin{pmatrix}
9x^2y&0&6x^2y\\0&0&0
\end{pmatrix}
\end{align*}
with each entry corresponding to a point in $Z^T=Y^{S^1}\times (S^2)^{S^1}$, where the different columns correspond to different fixed points of $Y^{S^1}$ enumerated as in Example \ref{ex:noice}. In particular the element
\[a=\begin{pmatrix}
3x^2y&0&0\\0&0&0
\end{pmatrix}\]
is contained in $H^*_T(Z)$. On the other hand $Z^{T_3}$ consists of $2$ disjoint copies of $Y$ and by the considerations in Example \ref{ex:noice}, the image $A_3$ of $H^*_{T/T_3}(Z^{T_3})\rightarrow H^*_{T/T_3}(Z^T)$ is generated as an $R_3=\ZZ[3x,3y]$-module by
\begin{align*}
&\begin{pmatrix}
1&1&1\\0&0&0
\end{pmatrix},
\begin{pmatrix}
6x&0&0\\0&0&0
\end{pmatrix},
\begin{pmatrix}
0&0&3x\\0&0&0
\end{pmatrix},
\begin{pmatrix}
9x^2&0&0\\
0&0&0
\end{pmatrix},\\
&\begin{pmatrix}
0&0&0\\1&1&1
\end{pmatrix},
\begin{pmatrix}
0&0&0\\6x&0&0
\end{pmatrix},
\begin{pmatrix}
0&0&0\\0&0&3x
\end{pmatrix},
\begin{pmatrix}
0&0&0\\9x^2&0&0
\end{pmatrix}
\end{align*}
We observe that $R\otimes_{R_3} A_3$ is generated as an $R$ module by the above elements and does not contain the element $a$.
\end{ex}

\bibliography{Realizationbib}

\begin{thebibliography}{10}

\bibitem{AFPSyzygies}
{\sc Allday, C., Franz, M., and Puppe, V.}
\newblock Equivariant cohomology syzygies and orbit structure.
\newblock {\em Transactions of the American Mathematical Society 366}, 12
  (2014), 6567--6589.

\bibitem{AP}
{\sc Allday, C., and Puppe, V.}
\newblock {\em Cohomological methods in transformation groups}, vol.~32 of {\em
  Cambridge Studies in Advanced Mathematics}.
\newblock Cambridge University Press, Cambridge, 1993.

\bibitem{AndersonFulton}
{\sc Anderson, D., and Fulton, W.}
\newblock {\em Equivariant Cohomology in Algebraic Geometry}.
\newblock Camebridge Studies in advanced Mathematics. Cambridge University
  Press, 2023.

\bibitem{BaiPomerleano}
{\sc Bai, S., and Pomerleano, D.}
\newblock Equivariant formality in complex-oriented theories, 2024.
\newblock preprint, arXiv:2405.05821v2.

\bibitem{ChangSkjelbred}
{\sc Chang, T., and Skjelbred, T.}
\newblock The topological {S}chur {L}emma and related results.
\newblock {\em Ann. of Math. (2) 100}, 2 (1974), 307--321.

\bibitem{IsabelleLiat}
{\sc Charton, I., and Kessler, L.}
\newblock Monotone symplectic six-manifolds that admit a hamiltonian gkm action
  are diffeomorphic to smooth fano threefolds, 2023.
\newblock arXiv:2308.10541v2.

\bibitem{Frankel}
{\sc Frankel, T.}
\newblock Fixed points and torsion on {K}\"ahler manifolds.
\newblock {\em Ann. of Math. (2) 70}, 1 (1959), 1--8.

\bibitem{FP:ECSWICFTA}
{\sc Franz, M., and Puppe, V.}
\newblock Exact sequences for equivariantly formal spaces.
\newblock {\em Transformation Groups 12\/} (2007), 65--76.

\bibitem{FP:ESFEFS}
{\sc Franz, M., and Puppe, V.}
\newblock Exact sequences for equivariantly formal spaces.
\newblock {\em C. R. Math. Rep. Acad. Sci. Canada 33}, 1 (2011), 1--10.

\bibitem{GKZCorrespondence}
{\sc Goertsches, O., Konstantis, P., and Zoller, L.}
\newblock The {GKM} correspondence in dimension 6, 2023.
\newblock arXiv:2210.01856v2.

\bibitem{GKZZ2}
{\sc Goertsches, O., Konstantis, P., and Zoller, L.}
\newblock On the {S}tiefel-{W}hitney classes of {GKM} manifolds.
\newblock {\em C. R. Math. Rep. Acad. Sci. Canada 46}, 2 (2024), 16--40.

\bibitem{GKM}
{\sc Goresky, M., Kottwitz, R., and MacPherson, R.}
\newblock Equivariant cohomology, {K}oszul duality, and the localization
  theorem.
\newblock {\em Invent. Math. 131}, 1 (1998), 25--83.

\bibitem{Milnor}
{\sc Milnor, J.}
\newblock Construction of universal bundles. {II}.
\newblock {\em Ann. of Math. (2) 63\/} (1956), 430--436.

\end{thebibliography}
\bibliographystyle{acm}

\end{document}